\documentclass[12pt]{article}
\usepackage{amsfonts,amsmath,latexsym,amssymb,euscript,graphicx,mathrsfs}
\usepackage{mathtools}
\usepackage{mathabx}
\usepackage{lastpage} 
\usepackage{graphicx} 
\usepackage{amsthm}
\usepackage[margin=1in]{geometry}
\usepackage{amsthm,amssymb}

\usepackage{enumerate}
\usepackage{wrapfig}
\usepackage{filecontents}
\usepackage{hyperref}
\usepackage{bookmark}
\usepackage{bbm} 
\DeclareMathAlphabet\EuFrak{U}{euf}{m}{n}

\usepackage{euscript}

\usepackage[english]{babel}
\usepackage[dvipsnames]{xcolor}
 
\newtheorem{prop}{Proposition}[section]

\newtheorem{lemme}[prop]{Lemma}

\newtheorem{thm}[prop]{Theorem}

\numberwithin{equation}{section}

\usepackage{array}
\bibliography{Ma-Nu-citations}

\def\HH{\EuFrak H}
\def\1{{\mathbbm{1}}}


\newcommand{\msplitsp}{\phantom{=}\hspace{0.5em}}

\renewcommand{\geq}{\geqslant}
\def\leq{\leqslant}

\usepackage{epstopdf}

\DeclarePairedDelimiter{\floor}{\lfloor}{\rfloor}

\numberwithin{subcase}{case}


\begin{document}

\title{Rate of convergence for the weighted Hermite variations of the fractional Brownian motion} 
%
%
%
%
%

\title{Rate of convergence for the weighted Hermite variations of the fractional Brownian motion}
\author{Nicholas Ma  \footnote{Email:\texttt{njma@ku.edu}}  and David Nualart 
 \footnote{Email:\texttt{nualart@ku.edu}}
 \thanks{%
The work of D. Nualart is supported by the NSF Grant DMS 1811181} \\
Department of Mathematics\\
The University of Kansas\\
Lawrence, Kansas 66045, USA
 \date{}}
\maketitle

\begin{abstract}
  In this paper we obtain a rate of convergence in the central limit theorem for high order weighted Hermite variations of the fractional Brownian motion. The proof is based on the techniques of Malliavin calculus and the quantitative stable limit theorems proved by Nourdin, Nualart and Peccati in \cite{nourdin2016}.
   \newline

\noindent
\textbf{Key words}: Weighted Hermite variations, Malliavin calculus, fractional Brownian motion\\

\noindent
\textbf{2000 Mathematics Subject Classification}: 60F05, 60H07, 60G15
\end{abstract}

\maketitle


\section{Introduction}
The fractional Brownian motion $B= \{B_t, t\ge 0\}$  is characterized by  being a zero-mean Gaussian  self-similar process  with stationary increments and variance $E(B_t^2)=t^{2H}$.  The self-similarity index $H \in (0,1)$ is called the Hurst parameter.   The fractional Brownian motion was first introduced by Kolmogorov in 1940. However, the landmark paper by Mandelbrot and Van Ness \cite{fbm} gave fractional Brownian motion its name and inspired much of the modern literature on the subject. 

\medskip
The study of single path behavior of stochastic processes often uses their power variations.  In particular, the fractional Brownian motion is known to have a $1/H$-variation on any finite time interval equal to the length of the interval multiplied by the constant $\kappa_H=E[|Z^{1/H}|]$, where $Z$ is a $N(0,1)$ random variable. That means, if we consider  the uniform partition of the interval $[0,1]$ into $n\ge 1$ intervals and for $0\le k\le n-1$  we denote $\Delta B_{k/n}=B_{(k+1)/n}-B_{k/n}$, we have
\[
\lim_{n\rightarrow \infty}\sum_{k=0}^{n-1}   |\Delta B_{k/n}|^{1/H}  \rightarrow  \kappa_H,
\]
where the convergence  holds almost surely and in $L^p(\Omega) $ for any $p\ge 2$.  A central limit theorem associated with this approximation can be obtained  by expanding the function $|x|^{1/H}$ into Hermite polynomials. In particular,  as a consequence of the Breuer-Major theorem \cite{BM}, for each integer $q\ge 2$ such that
$H<1-\frac 1{2q}$, we have the convergence in law
\[
\lim_{n\rightarrow \infty} \sum_{k=0}^{n-1}    \frac1{\sqrt n} H_q( n^{H} \Delta B_{k/n})    \stackrel{\mathcal{L}} {\rightarrow}   N(0,\sigma_{H,q}^2),
\]
where $H_q$ is the $q$th Hermite polynomial and 
\begin{equation} \label{sig}
\sigma_{H,q}^2=q!  \sum_{k \in \mathbb{Z}} \rho_H(k) ^q.
\end{equation}
Here
\begin{equation}
\rho_H(k)=\frac12(|k+1|^{2H} + |k-1|^{2H}-2|k|^{2H}),\qquad k \in\mathbb{Z}\label{eqrh}
\end{equation}
denotes the covariance of the stationary sequence $\{ B_{k+1}-B_k,  k\ge 0\}$.
 
 \medskip
There has been intensive research on the asymptotic behavior of the {\it weighted Hermite variations} of the fractional Brownian motion $B$, defined by
\begin{equation}
F_n =  \frac 1{\sqrt{n}}  \sum_{k=0}^{n-1} f(B_{k/n}) H_q(n^H \Delta B_{k/n})\label{eqfn},
\end{equation}
where $f$ is a given function.
The analysis of the asymptotic behavior of these quantities is motivated, for instance, by the study of the exact rates of convergence of some approximation schemes of scalar stochastic differential equations driven by the fractional Brownian motion  (see, for instance, \cite{GN,NN}), in addition to  the traditional applications of  $q$-variations to parameter estimation problems. {Additionally, as discovered in \cite{nourdin2008asymptotic}, new phenomena arise in the presence of weights. These phenomena were more fully studied in \cite{nourdin2010wpv} and  \cite{nourdin2009asymptotic}.}

\medskip
{It was shown by Nourdin, Nualart, and Tudor in \cite{nourdin2010wpv}} that, when $\frac1{2q}<H<1-\frac1{2q}$, the sequence $F_n$ defined in \eqref{eqfn}  converges in law to a mixture of Gaussian distributions.   More precisely, the following stable convergence holds as $n$ tends to infinity
\begin{equation} \label{conv}
( B, F_n)   \stackrel{\mathcal{L}} {\rightarrow} \left( B, \sigma_{H,q} \int_0^1 f(B_s) dW_s \right),
\end{equation}
where $W= \{W_t, t\in [0,1]\}$ is a standard Brownian motion independent of $B$, and $\sigma_{H,q}$ is given by \eqref{sig}.
\medskip
For $H$  outside the interval $( \frac1{2q}, 1-\frac1{2q})$ different phenomena occur. Specifically, it was shown in \cite{nourdin2010central} that when $0<H<\frac1{2q}$, $n^{qH-1/2} F_n$ converges in $L^2(\Omega)$ to $(-2)^{-q} \int_0^1 f^{(q)}(B_s)\,ds$, and when $1-\frac1{2q}<H<1$, $n^{q(1-H) -1/2} F_n$ converges in $L^2(\Omega)$ to $ \int_0^1 f(B_s)\,dZ_s^{(q)}$ where $Z^{q}$ is the Hermite process. In the critical case $H=\frac1{2q}$, there is convergence in law to a linear combination of the $H<\frac1{2q}$ and $\frac1{2q}<H<1-\frac1{2q}$ cases, and in the critical case $H=1-\frac1{2q}$ there is convergence in law with an additional logarithmic factor (see \cite{nourdin2010central}).

\medskip
We recall the assumption $\frac1{2q}<H<1-\frac1{2q}$,  under which convergence of the sequence $F_n$ was shown in \cite{nourdin2010wpv}. A natural question is to study the rate of the convergence in law  of the sequence $F_n$ to  $\sigma_{H,q} \int_0^1 f(B_s) dW_s$ stated in (\ref{conv}). When $f\equiv 1$,  Stein's method, combined with Malliavin calculus, allows one to derive  upper bounds for the rate of convergence of  the total variation distance (see the monograph by Nourdin and Peccati \cite{nourdin2012normal} and the references therein). {The series of papers \cite{binotto,nourdin2008asymptotic,nourdin2010central,nourdin2016,nourdin2010wpv,nourdin2009asymptotic,nourdin2010weak} have greatly contributed to the development of the Mallavin-stein approach, which has become a powerful and general tool to study limit theorems for functionals of Gaussian processes. }In the case of weighted variations, this methodology is no longer applicable.  In \cite{nourdin2016}, Noudin, Nualart, and Peccati developed a  new approach based on the interpolation method that provides
quantitative rates for the convergence of multiple Skorohod integrals to  a  mixture of Gaussian laws.
A basic result in this direction is Proposition \ref{p1} below.  In  \cite{nourdin2016}, the authors apply this approach to deduce a rate of convergence for $F_n$ in the case $q=2$ and $\frac14<H<\frac34$.

\medskip
The main purpose of this paper is to apply  the technique introduced in \cite{nourdin2016}   to weighted Hermite variations of any order $q\ge 2$, extending the results proved for  weighted quadratic variations.
We will show that the rate of convergence is bounded, up to a constant, by $n^{\Phi(H)}$, where the exponent $\phi(H)$ is defined by
\begin{equation}\label{eqphi}
\phi(H) = \left( \left | H-\frac 12 \right | - \frac 12 \right) \vee \left( q \left  | H-\frac 12 \right | - \frac {q-1}2 \right).
\end{equation}
That is, 
\[
\phi(H)=
\begin{cases}
  (-H)  \vee ( -qH +\frac 12)  & {\rm if}  \,\,  H \le \frac 12, \\
  (H-1)  \vee ( q(H-1) +\frac 12) &   {\rm   if} \,\,  H>\frac 12.
 \end{cases}
 \]
Notice that $\phi(H)=0 $ when $H$ is equal to one of the end points of the interval $( \frac1{2q}, 1-\frac1{2q})$ and it is symmetric with respect to the middle point $\frac 12$.  Moreover, there are unexpected transition phases
when $H=\frac 1{2q-2}$ and when $H= 1-\frac 1{2q-2}$.

\medskip
In order to state our main result, we need some notation and definitions.
We say that a function $f:\mathbb{R}\to \mathbb{R}$ has \emph{moderate growth} if there exist positive constants $A$, $B$, and $\alpha<2$ such that for all $x\in \mathbb{R}$, $|f(x)|\le A\exp(B|x|^\alpha)$.

\medskip
Given a measurable function $f: \mathbb{R} \to \mathbb{R}$, an integer $N\ge 0$ and a real number $p\ge 1$, we define the semi-norm
\begin{equation}\label{norm}
\| f\|_{N,p} = \sum_{i=0}^N \sup_{0\le t \le 1} \|f^{(i)}\|_{L^p(\mathbb{R},\gamma_t)}
\end{equation}
where $\gamma_t$ is the normal distribution $N(0,t)$.

\medskip 
We can now state the main result of this paper. This result extends the work done in \cite{nourdin2010wpv} that proves stable convergence for any $q$, and the  work done in 
\cite{nourdin2016} that provides a quantitative bound in the $q=2$ case.

\begin{thm}\label{mainresult}
Let $q\in \mathbb N$, $q\ge 2$. Assume that the Hurst index $H$ of  the fractional Brownian motion $B$ belongs to $\left( \frac1{2q},1-\frac1{2q}\right).$ Consider a function $f:\mathbb R\to \mathbb R$ of class $C^{2q}$ such that $f$ and its first $2q$ derivatives have moderate growth. Suppose in addition that 
\[
E\left[\left(\int_0^1 f^2(B_s)\,ds\right)^{(1-q)\alpha}\right]<\infty
\]
 for some $\alpha>1$. Consider the sequence of random variables $F_n$ defined by \eqref{eqfn}. Set $S=\sqrt{\sigma_{H,q}^2\int_0^1 f^2(B_s)\,ds}$, where $\sigma_{H,q}$ is given in  (\ref{sig}). Then, for any function $\varphi:\mathbb R \to \mathbb R$ of class $C^{2q+1}$ with $\| \varphi^{(k)}\|_\infty <\infty$ for each $k=0,\ldots, 2q+1$, we have
\begin{align}
\begin{split}
\left| E[\varphi(F_n)]-E[\varphi(S\eta)] \right| &\le  C_{H,f,q}\sup_{1\le i\le 2q+1}\|\varphi^{(i)}\|_{\infty} n^{\phi(H)},
\end{split}
\end{align}
where $\eta$ is a standard normal variable independent of $B$. The constant $C_{H,f,q}$ has the form 
\[
C_{H,f,q} = C_{H,q} \max\left\{\|f\|_{2q,2}, E[S^{(2-2q)\alpha}]^{1/\alpha}\|f\|_{2q,(2q+2)\beta }^{2q+1}  \right\},
\]
 and $1/\alpha+1/\beta=1$.
\end{thm}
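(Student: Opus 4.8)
The plan is to represent $F_n$ as a $q$-th order Skorohod integral and then invoke the quantitative stable limit theorem, Proposition \ref{p1}, reducing the problem to the sharp estimation of weighted covariance sums of the sequence $\rho_H$. First I would work in the Hilbert space $\mathfrak{H}$ associated with $B$, writing $\delta_k\in\mathfrak{H}$ for the increment $\Delta B_{k/n}$ (so that $\|\delta_k\|_{\mathfrak{H}}^2=n^{-2H}$ and $\langle\delta_k,\delta_l\rangle_{\mathfrak{H}}=n^{-2H}\rho_H(k-l)$), setting $e_k=n^H\delta_k$ (a unit vector, with $\langle e_k,e_l\rangle_{\mathfrak{H}}=\rho_H(k-l)$), and letting $\varepsilon_{k/n}\in\mathfrak{H}$ represent $B_{k/n}$. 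Since $H_q(B(e_k))=\delta^q(e_k^{\otimes q})$, the product/commutation formula for Skorohod integrals gives
\[
f(B_{k/n})\,H_q(n^H\Delta B_{k/n})=\delta^q\!\bigl(f(B_{k/n})e_k^{\otimes q}\bigr)+\sum_{i=1}^q\binom{q}{i}\langle\varepsilon_{k/n},e_k\rangle_{\mathfrak{H}}^{\,i}\,\delta^{q-i}\!\bigl(f^{(i)}(B_{k/n})e_k^{\otimes(q-i)}\bigr).
\]
Summing over $k$ and dividing by $\sqrt n$ yields $F_n=\delta^q(u_n)+R_n$, where $u_n=\frac1{\sqrt n}\sum_k f(B_{k/n})e_k^{\otimes q}$ is the leading symmetric integrand and $R_n$ collects lower-order Skorohod integrals. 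A direct computation from $\langle\varepsilon_{k/n},\delta_k\rangle_{\mathfrak{H}}=\tfrac12 n^{-2H}\bigl((k+1)^{2H}-k^{2H}-1\bigr)$ shows $|\langle\varepsilon_{k/n},e_k\rangle_{\mathfrak{H}}|\le C\,n^{|H-1/2|-1/2}$ uniformly in $k$, so I expect $R_n$ to be controlled by the first branch of $\phi$.

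Next I would apply Proposition \ref{p1} to $G_n:=\delta^q(u_n)$ with the random variance $S^2=\sigma_{H,q}^2\int_0^1 f^2(B_s)\,ds$. This reduces the bound on $|E[\varphi(G_n)]-E[\varphi(S\eta)]|$ to controlling, against the norms $\|\varphi^{(i)}\|_\infty$ for $1\le i\le 2q+1$: (a) the conditional-variance discrepancy, an $L^1$-type estimate on $\tfrac1q\langle u_n,DG_n\rangle_{\mathfrak{H}}-S^2$; and (b) a family of $L^p$-norms of the contractions $u_n\otimes_r u_n$ and the iterated Malliavin derivatives $D^a u_n$. Because $S$ is random, the interpolation underlying Proposition \ref{p1} differentiates the conditional Gaussian density $\sim\exp(-x^2/2S^2)$ repeatedly, producing negative powers of $S^2$ up to $S^{-2(q-1)}$; this is exactly where the hypothesis $E[(\int_0^1 f^2(B_s)\,ds)^{(1-q)\alpha}]<\infty$ enters, and, after a Hölder split with $1/\alpha+1/\beta=1$ separating the negative moments of $S$ from the weights, it accounts for the factor $E[S^{(2-2q)\alpha}]^{1/\alpha}\|f\|_{2q,(2q+2)\beta}^{2q+1}$ in the constant, while the simpler terms produce the $\|f\|_{2q,2}$ piece.

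The analytic heart of the argument is the sharp estimation of the quantities in (a) and (b). Using $\langle e_k,e_l\rangle_{\mathfrak{H}}=\rho_H(k-l)$, each reduces to a weighted multi-index sum of products of $\rho_H$ at index differences carrying smooth weights $f^{(j)}(B_{k/n})$; the moderate-growth and integrability hypotheses on $f$ let me factor the weights out in $L^p$ and pass to purely deterministic sums. I expect the conditional-variance term (a), together with the remainder $R_n$, to carry the basic CLT fluctuation and contribute a rate $n^{|H-1/2|-1/2}$, whereas the contraction terms in (b), governed by sums such as $\sum_{|m|\le n}\rho_H(m)^{2(q-r)}$ and their four-fold analogues, contribute the non-Gaussian rate $n^{q|H-1/2|-(q-1)/2}$; the maximum of the two is precisely $n^{\phi(H)}$. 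The main obstacle I anticipate is making these estimates sharp, rather than merely $o(1)$, uniformly across the whole range $(\tfrac1{2q},1-\tfrac1{2q})$: the two families cross at $H=\tfrac1{2q-2}$ and $H=1-\tfrac1{2q-2}$, which forces a case-dependent analysis of the growth of $\sum_{|m|\le n}|m|^{p(2H-2)}$ (convergent, logarithmic, or polynomially divergent according to the sign of $1+p(2H-2)$) and is the source of the unexpected transition phases. Combining the remainder estimate for $R_n$ with the Proposition \ref{p1} bound for $G_n$ then delivers the stated inequality with the displayed constant $C_{H,f,q}$.
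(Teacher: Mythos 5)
Your plan follows the same route as the paper: the same integrand (your $n^{-1/2}\sum_k f(B_{k/n})e_k^{\otimes q}$ is exactly the paper's $u_n=n^{qH-1/2}\sum_k f(B_{k/n})\delta_{k/n}^{\otimes q}$), the same decomposition $F_n=\delta^q(u_n)+R_n$ (your identity is Lemma \ref{lemme} applied to $F=f(B_{k/n})$, $u=e_k^{\otimes q}$), Proposition \ref{p1} applied to $G_n=\delta^q(u_n)$, a H\"older split with $1/\alpha+1/\beta=1$ isolating the negative moments of $S$, and a reduction to power sums of $\rho_H$ with a case analysis at $H=\frac1{2q-2}$ and $H=1-\frac1{2q-2}$.

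There is, however, a genuine error in the rates you assign to the individual pieces, and one claimed step would fail as stated. You assert that $R_n$ is controlled by the first branch $n^{|H-1/2|-1/2}$ of $\phi$, the second branch $n^{q|H-1/2|-(q-1)/2}$ coming only from the contraction terms in (b). This is false: the $i=q$ term of your own remainder is $n^{-1/2}\sum_k\langle\varepsilon_{k/n},e_k\rangle^q f^{(q)}(B_{k/n})$, and since for $H<1/2$ one has $\langle\varepsilon_{k/n},e_k\rangle=\frac12 n^{-H}\bigl((k+1)^{2H}-k^{2H}-1\bigr)\approx-\frac12 n^{-H}$ with a fixed sign (no cancellation), this term behaves like $(-2)^{-q}\,n^{1/2-qH}\int_0^1 f^{(q)}(B_s)\,ds$, i.e.\ it is of exact order $n^{q|H-1/2|-(q-1)/2}$, which strictly dominates $n^{|H-1/2|-1/2}$ on $\bigl(\frac1{2q},\frac1{2q-2}\bigr)$ (and symmetrically on $\bigl(1-\frac1{2q-2},1-\frac1{2q}\bigr)$). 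So the estimate $E|R_n|\le Cn^{|H-1/2|-1/2}$ is not provable; the correct bound is $E|R_n|\le Cn^{\phi(H)}$, and in fact the remainder is precisely where the paper locates the phase transition (see its closing Remark). The mixing goes the other way as well: the conditional-variance term (a) and the terms (b) each contribute both exponents, so the two-branch case analysis must be run inside each of the three families of estimates (as in the paper's Steps 1, 3 and 4) rather than being assigned one branch per family. Because your final assembly takes the maximum of the two branches, the theorem is still reachable along your route, but only after weakening all of the intermediate claims to $n^{\phi(H)}$.

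A secondary inaccuracy: the negative powers of $S$ do not come from ``differentiating the conditional Gaussian density'' inside Proposition \ref{p1}---the powers $S^{|b'|-2j}$ appearing there are nonnegative. They arise when the derivatives $D^\ell S=D^\ell\sqrt{S^2}$ demanded by that proposition are rewritten, via the Faa di Bruno formula, in terms of derivatives of $S^2$ (the only object directly expressible through $f$ and $B$), producing factors $S^{1-2(m_1+\cdots+m_\ell)}$ down to $S^{2-2q}$; this is exactly where the negative-moment hypothesis and the factor $E[S^{(2-2q)\alpha}]^{1/\alpha}\|f\|_{2q,(2q+2)\beta}^{2q+1}$ enter.
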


The paper is organized as follows. Section 2 contains some preliminaries on the fractional Brownian motion and  its associated Malliavin calculus. The basic rate of convergence result for multiple Skorohod integrals, Proposition \ref{p1},  is also stated in this section. Section 3 is devoted to the proof of Theorem \ref{mainresult}. The proof intensively uses the techniques of Malliavin calculus  and detailed estimates for the sums of powers of the covariance function $\rho_H$ obtained in Section 2 (see Lemma  \ref{threebeta1}).  {   We have included an example at the end of Section 3  that explains why the phase of transition in the rate of convergence occurs.
} A technical lemma is proved in the Appendix.

\section{Preliminaries}

In this section we  first present some definitions and  basic results on the factional Brownian motion and  the associated Malliavin calculus.  The reader is referred to the monographs \cite{nualart2006malliavin} and \cite{nourdin2012normal} for a detailed account on these topics.  We also recall an upper bound for the approximation of multiple Skorohod integrals by a mixture of Gaussian laws that will play a fundamental role in the proof of the main result.

\subsection{Fractional Brownian motion}

Consider a fractional Brownian motion $B=\left\{B_t,  t\in[0,1]\right\}$ with Hurst parameter $H\in(0,1)$ defined in a probability space $(\Omega, \mathcal{F}, P)$. That means, $B$ is a zero mean Gaussian process with covariance 
\[
E(B_tB_s)=\frac12(t^{2H}+s^{2H}-|t-s|^{2H}), \quad s,t\in [0,1].
\]
Let $\mathcal{E}$ be the space of step functions on $[0,1]$ and consider the Hilbert space defined as the  closure of $\mathcal{E}$ under the inner product
$\langle \1_{[0,t]}, \1_{[0,s]}\rangle_{\mathfrak{H}}=E(B_tB_s)$ for $s,t\in[0,1]$.  Then the mapping $\1_{[0,t]} \rightarrow B_t$ can be extended to a linear isometry between $\HH$ and the Gaussian space generated by $B$. We denote by $B(h)$ the image of $h\in \HH$ by this isometry. With this notation, $\{B(h), h \in \HH\}$ is an {\it isonormal Gaussian process} associated with the Hilbert space $\HH$.
 We refer the reader to the references \cite{nourdin2012selected,nualart2006malliavin}  for  a detailed study of  this process.
  
For any integer $q\geq 1$, we denote by $%
\EuFrak H^{\otimes q}$ and $\EuFrak H^{\odot q}$, respectively, the $q$th tensor product and the $q$th symmetric tensor product of $\EuFrak H$.

From now on, we assume that $\mathcal{F}$ is the $P$-completion of the $\sigma$-field generated by $B$. 
For every integer $q\geq 1$, we let $\mathcal{H}_{q}$ be the $q$th {\it Wiener chaos} of $B$,
that is, the closed linear subspace of $L^{2}(\Omega)$
generated by the random variables $\{H_{q}(B(h)),h\in \EuFrak H,\left\|
h\right\| _{\EuFrak H}=1\}$, where $H_{q}$ is the $q$th Hermite polynomial defined by
\[
H_q(x)=(-1)^q e^{x^2/2}\frac{d^q}{dx^q}\big(e^{-x^2/2}\big).
\]
We denote by $\mathcal{H}_{0}$ the space of constant random variables. For
any $q\geq 1$, the mapping $I_{q}(h^{\otimes q})=H_{q}(B(h))$ provides a
linear isometry between $\EuFrak H^{\odot q}$
(equipped with the modified norm $\sqrt{q!}\left\| \cdot \right\| _{\EuFrak %
H^{\otimes q}}$) and $\mathcal{H}_{q}$ (equipped with the $L^2(\Omega)$ norm). For $q=0$, we set by convention $%
\mathcal{H}_{0}=\mathbb{R}$ and $I_{0}$ equal to the identity map.

It is well-known (Wiener chaos expansion) that $L^{2}(\Omega)$
can be decomposed into the infinite orthogonal sum of the spaces $\mathcal{H}%
_{q}$, that is: any square integrable random variable $F\in L^{2}(\Omega)$ admits the following chaotic expansion:
\begin{equation}
F=\sum_{q=0}^{\infty }I_{q}(f_{q}),  \label{E}
\end{equation}%
where $f_{0}=E[F]$, and the $f_{q}\in \EuFrak H^{\odot q}$, $q\geq 1$, are
uniquely determined by $F$. 


Let $\{e_{k},\,k\geq 1\}$ be a complete orthonormal system in $\EuFrak H$.
Given $f\in \EuFrak H^{\odot p}$, $g\in \EuFrak H^{\odot q}$ and $%
r\in\{0,\ldots ,p\wedge q\}$, the $r$th {\it contraction} of $f$ and $g$
is the element of $\EuFrak H^{\otimes (p+q-2r)}$ defined by
\begin{equation}
f\otimes _{r}g=\sum_{i_{1},\ldots ,i_{r}=1}^{\infty }\langle
f,e_{i_{1}}\otimes \ldots \otimes e_{i_{r}}\rangle _{\EuFrak H^{\otimes
r}}\otimes \langle g,e_{i_{1}}\otimes \ldots \otimes e_{i_{r}}\rangle _{%
\EuFrak H^{\otimes r}}.  \label{v2}
\end{equation}%
Notice that $f\otimes _{r}g$ is not necessarily symmetric. We denote its
symmetrization by $f\widetilde{\otimes }_{r}g\in \EuFrak H^{\odot (p+q-2r)} $%
. Moreover, $f\otimes _{0}g=f\otimes g$ equals the tensor product of $f$ and
$g$ while, for $p=q$, $f\otimes _{q}g=\langle f,g\rangle _{\EuFrak %
H^{\otimes q}}$. Contraction operators  appear in the following formula for products of multiple Wiener-It\^o integrals (see, for instance, \cite{nualart2006malliavin} Proposition 1.1.3) :
\begin{equation}\label{prodform}
I_p(f) I_q(g) = \sum_{r=0}^{p\wedge q} r! \binom p r \binom q r I_{p+q-2r} (f \widetilde{\otimes}_r g),
\end{equation}
for any  $f\in \EuFrak H^{\odot p}$ and  $g\in \EuFrak H^{\odot q}$.

\medskip
We consider the uniform partition of the interval $[0,1]$, and, for $n\ge 1$ and $k=0, 1,\ldots, n-1$, let $\delta_{k/n}=\1_{[k/n,(k+1)/n]}$ and $\varepsilon_{k,n}=\1_{[0,k/n]}$.  We will make use of the notation
\begin{equation}  \label{alpha}
\alpha_{k,t} = \langle \delta_{k/n},\1_{[0,t]}\rangle_{\mathfrak{H}}
\end{equation}
and
\begin{equation} \label{beta}
\beta_{j,k} = \langle \delta_{j/n},\delta_{k/n}\rangle_{\mathfrak{H}}
\end{equation}
for any $t\in [0,1]$ and $j,k=1,\dots, n-1$. Notice that
\[
\beta_{j,k} = n^{-2H} \rho_H(j-k)
\]
where $\rho_H$ has been defined in $\eqref{eqrh}$.

For the proof of Theorem \ref{mainresult} we need several technical estimates on the quantities  $\alpha_{k,t} $ and  $\beta_{j,k}$.
We first reproduce a useful technical lemma from \cite{nourdin2016}.
\begin{lemme}
\label{lem1} Let $0<H<1$ and $n\geq 1$. We have, for some constant $C_H$ that depends on $H$,
\begin{itemize}
\item[(a)] $\left|  \alpha_{k,t} \right| \leqslant n^{-(2H\wedge 1)}$ for any $t\in
\lbrack 0,1]$ and $k=0,\dots, n-1$.
\item[(b)] $\sup_{t\in \lbrack 0,1]}\sum_{k=0}^{n-1}\left|  \alpha_{k,t}\right| \leq C_H$.
\end{itemize}
\end{lemme}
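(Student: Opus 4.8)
The plan is to reduce both parts to elementary estimates on the single function
\[
G(s) := E[B_s B_t] = \tfrac12\bigl(s^{2H} + t^{2H} - |s-t|^{2H}\bigr),\qquad s\in[0,1],
\]
with $t$ regarded as a fixed parameter. Indeed, writing $a=k/n$, $b=(k+1)/n$ and using $\delta_{k/n}=\1_{[0,(k+1)/n]}-\1_{[0,k/n]}$ together with $\langle\1_{[0,s]},\1_{[0,t]}\rangle_{\mathfrak H}=E[B_sB_t]$, we obtain the key identity
\[
\alpha_{k,t}=\langle\delta_{k/n},\1_{[0,t]}\rangle_{\mathfrak H}=G(b)-G(a).
\]
Thus (a) asks for a bound on a single increment of $G$ over an interval of length $1/n$, while (b) asks for the total variation of $G$ sampled along the uniform grid.

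For part (a) I would split according to the sign of $2H-1$. When $0<H\le\frac12$ the exponent lies in $(0,1]$, so $x\mapsto|x|^{2H}$ is $2H$-H\"older on $\R$, i.e.\ $\bigl||x|^{2H}-|y|^{2H}\bigr|\le|x-y|^{2H}$. Applying this to the two first differences in $2\alpha_{k,t}=\bigl(b^{2H}-a^{2H}\bigr)-\bigl(|b-t|^{2H}-|a-t|^{2H}\bigr)$, and noting both pairs of arguments differ by exactly $b-a=1/n$, gives $2|\alpha_{k,t}|\le 2(b-a)^{2H}$, that is $|\alpha_{k,t}|\le n^{-2H}=n^{-(2H\wedge1)}$ with constant $1$. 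When $\frac12<H<1$ the function $G$ is $C^1$ on $[0,1]$ with $G'(s)=H\bigl(s^{2H-1}-\sgn(s-t)|s-t|^{2H-1}\bigr)$, and since $2H-1>0$ both $s^{2H-1}$ and $|s-t|^{2H-1}$ are bounded by $1$ on $[0,1]$; hence $|G'|\le 2H$ and $|\alpha_{k,t}|=\bigl|\int_a^bG'(s)\,ds\bigr|\le 2H\,n^{-1}$. In both regimes this yields the announced rate $n^{-(2H\wedge1)}$, up to the harmless factor $2H<2$ when $H>\frac12$.

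For part (b) the identity $\alpha_{k,t}=G(b)-G(a)$ shows that $\sum_{k=0}^{n-1}|\alpha_{k,t}|$ is exactly the variation of $G$ along the points $0,\tfrac1n,\dots,1$, and since the variation over a finite partition never exceeds the total variation,
\[
\sum_{k=0}^{n-1}|\alpha_{k,t}|=\sum_{k=0}^{n-1}\bigl|G(\tfrac{k+1}n)-G(\tfrac kn)\bigr|\le \mathrm{TV}(G;[0,1])=\int_0^1|G'(s)|\,ds.
\]
I would then bound the integrand by $|G'(s)|\le H\bigl(s^{2H-1}+|s-t|^{2H-1}\bigr)$ and integrate, obtaining $\int_0^1|G'|\le\frac12\bigl(1+t^{2H}+(1-t)^{2H}\bigr)\le\frac32$ uniformly in $t$ and $n$, which is the claim with $C_H=\frac32$.

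The main obstacle is the behaviour of $G'$ at $s=t$ when $0<H<\frac12$: there $|s-t|^{2H-1}$ blows up, so one must verify that $G$ is still absolutely continuous and that the singularity is integrable. This is exactly the inequality $2H-1>-1$, valid for every $H>0$, whence $\int_0^1|s-t|^{2H-1}\,ds=\frac{t^{2H}+(1-t)^{2H}}{2H}<\infty$ and the identity $\mathrm{TV}(G;[0,1])=\int_0^1|G'|$ holds. The other point to keep in mind is that the case split in (a) is genuinely necessary, since $|x|^{2H}$ passes from concave ($H\le\frac12$) to convex and $C^1$ ($H>\frac12$); the uniformity in $t$ in both parts is then automatic, as every estimate above is taken after bounding away the $t$-dependence.
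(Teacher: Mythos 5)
The paper never proves this lemma: it is reproduced verbatim from \cite{nourdin2016}, with no argument supplied, so your proposal has to be judged on its own merits, and on those merits it is essentially correct. The identity $\alpha_{k,t}=G((k+1)/n)-G(k/n)$ with $G(s)=E[B_sB_t]$ is exactly the right reduction; the subadditivity bound $\bigl||x|^{2H}-|y|^{2H}\bigr|\le|x-y|^{2H}$ settles part (a) for $H\le\frac12$ with constant $1$; the bound $|G'|\le 2H$ settles it for $H>\frac12$; and for part (b) the passage from the grid sum to the total variation $\int_0^1|G'(s)|\,ds$, with integrability of $|s-t|^{2H-1}$ guaranteed by $2H-1>-1$, yields the uniform constant $\frac32$. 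This is a clean continuous counterpart of the standard discrete argument (telescoping the monotone part $s\mapsto s^{2H}$ and using unimodality of $s\mapsto|s-t|^{2H}$, whose grid variation is at most $t^{2H}+(1-t)^{2H}\le 2$), and it buys you absolute uniformity in $t$ and $n$ with explicit constants.

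One caveat deserves emphasis. For $H>\frac12$ your argument gives $|\alpha_{k,t}|\le 2H\,n^{-1}$, not the constant-$1$ bound that part (a) literally asserts, and this is not a defect of your proof: the literal statement is false. Take $H=\frac34$, $n=3$, $k=1$, $t=1$; then
\begin{equation*}
\alpha_{1,1}=E\bigl[(B_{2/3}-B_{1/3})B_1\bigr]=\left(\tfrac23\right)^{3/2}-\left(\tfrac13\right)^{3/2}\approx 0.352>\tfrac13=n^{-(2H\wedge 1)}.
\end{equation*}
So part (a) must be read as holding up to the constant $C_H$ announced in the preamble of the lemma (your proof gives $C_H=\max(1,2H)\le 2$), which is all that is ever used in the paper, since every application only requires the rate in $n$. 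Your closing remark that the factor $2H<2$ is harmless is therefore accurate, but it is worth recording that this factor cannot be removed, i.e., your proof establishes the correct version of the statement rather than the one printed.
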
 

The next lemma estimates the sum of powers of the terms $\beta_{j,k}$.
\begin{lemme}\label{betas}\hspace{0em}
\begin{enumerate}[(a)]
\item For $a\ge 1$ and $0\le i \le n-1$ and some constant $C_H$ that depends on $H$,
\[
\sum_{j=0}^{n-1}\left| \beta_{j,i}\right|^a \le  C_H n^{(1-2a)\vee (-2aH)}.
\]
\item For $a\ge 1$ and for some constant $C_H$ that depends on $H$,
\begin{equation}
\sum_{j,k=0}^{n-1} |\beta_{j,k}|^a \le C_H n^{(2-2a)\vee (1-2aH)}.
\end{equation}
\end{enumerate}
\end{lemme}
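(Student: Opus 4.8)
The plan is to reduce both estimates to sums of powers of the covariance function $\rho_H$, use its polynomial decay at infinity, and then evaluate an elementary power sum. Using the identity $\beta_{j,k}=n^{-2H}\rho_H(j-k)$ recorded just after \eqref{beta}, I would write
\[
\sum_{j=0}^{n-1}|\beta_{j,i}|^a = n^{-2aH}\sum_{j=0}^{n-1}|\rho_H(j-i)|^a,
\qquad
\sum_{j,k=0}^{n-1}|\beta_{j,k}|^a = n^{-2aH}\sum_{j,k=0}^{n-1}|\rho_H(j-k)|^a,
\]
so that everything hinges on the behaviour of $\rho_H$. The key ingredient is the decay estimate $|\rho_H(m)|\le C_H|m|^{2H-2}$ for $|m|\ge 1$ (with $\rho_H(0)=1$), which follows by viewing $\rho_H(m)=\tfrac12\big(g(m+1)+g(m-1)-2g(m)\big)$ as a discrete second difference of $g(x)=|x|^{2H}$ and invoking a second-order Taylor expansion together with $g''(x)=2H(2H-1)|x|^{2H-2}$.

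With $b:=2a(H-1)=2aH-2a$ we then have $|\rho_H(m)|^a\le C|m|^{b}$ for $m\ne 0$. For fixed $i\in\{0,\dots,n-1\}$ the index $j-i$ ranges inside $(-n,n)$, so
\[
\sum_{j=0}^{n-1}|\rho_H(j-i)|^a \le C + C\sum_{1\le|m|<n}|m|^{b}\le C\,n^{\,0\vee(b+1)},
\]
where I use that $\sum_{m=1}^{N}m^{b}$ is of order $N^{b+1}$ when $b>-1$ and of order $1$ when $b<-1$. Multiplying by $n^{-2aH}$ and simplifying the exponents via $-2aH+(b+1)=1-2a$ and $-2aH+0=-2aH$ yields part (a), namely $n^{(1-2a)\vee(-2aH)}$. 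For part (b) I would note that the bound in (a) is \emph{uniform} in $i$, so summing the inner sum over the second index contributes one extra factor of $n$:
\[
\sum_{j,k=0}^{n-1}|\rho_H(j-k)|^a\le C\,n\cdot n^{\,0\vee(b+1)}=C\,n^{\,1\vee(b+2)},
\]
and multiplying by $n^{-2aH}$ produces the exponents $1-2aH$ and $2-2a$, i.e. the claimed bound $n^{(2-2a)\vee(1-2aH)}$.

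The one delicate point, which I expect to be the main obstacle, is the \emph{critical exponent} $b=-1$, that is $H=1-\tfrac1{2a}$: there the harmonic-type sum $\sum_{1\le|m|<n}|m|^{-1}$ produces a factor $\log n$ rather than a constant, and precisely at this value the two competing exponents coincide ($1-2a=-2aH$), so the honest bound carries an extra $\log n$. I would handle this either by absorbing the logarithm into an arbitrarily small power $n^{\varepsilon}$ (paying a constant depending on $\varepsilon$), or by treating this isolated value of $H$ separately; for every other $H\in(0,1)$ the power sum is strictly sub- or super-summable and the clean bound holds as stated. (Note that for $H\le\tfrac12$ one always has $b\le -a\le -1$, so the sum converges and the exponent $-2aH$ dominates, while the genuine critical values $H=1-\tfrac1{2a}$ occur only in $(\tfrac34,1)$; this is the mechanism behind the transition phases mentioned after \eqref{eqphi}.)
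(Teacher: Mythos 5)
Your proof is correct away from the critical case and follows exactly the same route as the paper's: the identity $\beta_{j,k}=n^{-2H}\rho_H(j-k)$, the decay $|\rho_H(m)|\le C_H|m|^{2H-2}$ obtained from a second-order Taylor expansion of $|x|^{2H}$, the elementary power-sum evaluation for part (a), and the reduction of (b) to (a) by summing over the free index at the cost of one factor of $n$ (the paper phrases this as the change of indices $(j,k)\mapsto (j,h)$, $h=j-k$).

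The one point where you depart from the paper is the critical exponent $a(2H-2)=-1$, i.e. $H=1-\frac{1}{2a}$, and there you are more careful than the paper, not less. Since $\rho_H(m)\sim H(2H-1)|m|^{2H-2}$ is a genuine asymptotic (not merely an upper bound), the sum in (a) really is of order $n^{-2aH}\log n$ at this value of $H$, while the two exponents $1-2a$ and $-2aH$ coincide there; so the clean bound fails by a factor $\log n$, exactly as you say. The paper's proof elides this: it asserts that for $a(2H-2)\ge -1$ the sum ``diverges at the rate $n^{a(2H-2)+1}$,'' which is wrong precisely at equality, where the rate is logarithmic. Your remedies (absorbing the logarithm into $n^{\varepsilon}$, or excluding the isolated critical values of $H$) are the honest ways to state the lemma. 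Nor is the caveat vacuous for the paper: the lemma is applied in Step 1 with integer exponents $s\le q-1$, and the critical value for $s=q-1$, namely $H=1-\frac{1}{2q-2}$, is the phase-transition point of $\phi(H)$ inside $\left(\frac{1}{2q},1-\frac{1}{2q}\right)$; at that single $H$ the exponent comparison used to absorb the term $A_{q-1}$ is an equality rather than a strict inequality, so the logarithm survives and the argument as written only yields $E[|F_n-G_n|]\le C n^{\phi(H)}\sqrt{\log n}$ there (the same subtlety recurs in Lemma \ref{threebeta1}, whose stated bound also drops a logarithm at the corresponding critical exponent). For every other admissible $H$ your proof and the paper's coincide; one small additional remark is that at $H=\frac12$ with $a=1$ your power bound alone would also give a logarithm, but the sum is harmless there because $\rho_{1/2}$ vanishes off the origin.
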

\begin{proof}
Using (\ref{eqrh}), we have 
\[
\sum_{j=0}^{n-1} |\beta_{j,i}|^a = n^{-2aH} \sum_{j=0}^{n-1} |\rho_H(j-i)|^{a}.
\]
Taking into account that 
 $|\rho_H(j-i)|^a$ converges to zero as $j$ tends to infinity at the rate  $j^{a(2H-2)}$, when $a(2H-2)<-1$ the above sum is bounded by a constant. When $a(2H-2)\ge -1$, it diverges at the rate $n^{a(2H-2)+1}$. This gives the estimate in part (a). 
For (b), we make the change of indices $(j,k) \rightarrow (j, h)$, where $h=j-k$, we   estimate the sum in $j$ by    $n$ and apply (a) for the sum in $h$.
\end{proof}

We recall  a version for infinite sums of the rank-one Brascamp-Lieb inequality that will be used to estimate sums of products of the terms $\beta_{j,k}$.
The statement is reproduced 
 from \cite[Proposition 2.4]{zhou}, which is taken from the works  \cite{barthe,tao} and \cite{brascamp}:
\begin{prop}[Brascamp-Lieb inequality]\label{brascamplieb}
Let $2\le M\le N$ be fixed integers. Consider nonnegative measurable functions $f_j:\mathbb{R}\to \mathbb{R}_+$, $1\le j\le N$, and fix nonzero vectors $\mathbf{v_j}\in \mathbb{R}^M$. Fix positive numbers $p_j$, $1\le j\le N$, verifying the following conditions:
\begin{enumerate}[(i)]
\item $\sum_{j=1}^N p_j = M$.
\item For any subset $I\subset\{1,\ldots, N\}$, we have $\sum_{j\in I} p_j \le \dim ( \mathrm{Span} \{\mathbf{v_j},j\in I\})$.
\end{enumerate}
Then, there exists a finite constant $C$, depending on $N,M$ and the $p_j$'s such that
\begin{equation}
\sum_{\mathbf{k}\in \mathbb{Z}^M} \prod_{j=1}^N f_j(\mathbf{k}\cdot \mathbf{v_j}) \le C \prod_{j=1}^N \left( \sum_{k\in \mathbb{Z}} f_j(k)^{1/p_j}   \right)^{p_j}.
\end{equation}
\end{prop}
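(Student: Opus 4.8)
The plan is to prove the discrete inequality by induction on the ambient dimension $M$, using the generalized Hölder inequality as the engine and condition (ii) to control the bookkeeping of the exponents.

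First I would dispose of a base case $M=1$ (treated as a separate lemma, even though the proposition is stated for $M\ge 2$). Here every $\mathbf{v_j}$ is a nonzero scalar, condition (i) reads $\sum_{j=1}^N p_j=1$, and condition (ii) applied to singletons forces each $p_j\le\dim\mathrm{Span}\{\mathbf{v_j}\}=1$. Then $\sum_{k\in\mathbb Z}\prod_{j=1}^N f_j(k v_j)$ is bounded directly by the generalized Hölder inequality with the exponents $1/p_j$ (whose reciprocals sum to $1$), and each resulting factor $\sum_k f_j(kv_j)^{1/p_j}$ is comparable to $\sum_k f_j(k)^{1/p_j}$ after accounting for the integer spacing carried by $v_j$. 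I would also record the two structural consequences of the hypotheses that drive the induction: taking $I=\{1,\dots,N\}$ in (ii) together with (i) shows that the $\mathbf{v_j}$ span $\mathbb R^M$, while the set function $I\mapsto\dim\mathrm{Span}\{\mathbf{v_j}:j\in I\}$ is a submodular matroid rank function, so condition (ii) says exactly that the weights $p_j$ lie in the associated polymatroid.

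For the inductive step from $M-1$ to $M$ I would sum out one coordinate, say the last. Writing $\mathbf{v_j}=(\mathbf{w_j},a_j)$ with $\mathbf{w_j}\in\mathbb R^{M-1}$, I split the indices into $J_0=\{j:a_j=0\}$ and $J_1=\{j:a_j\neq 0\}$, so that only the $J_1$-factors depend on the last coordinate $k_M$. I would first apply a one-dimensional Hölder inequality to the inner sum over $k_M$, and then feed the outer sum over $\mathbf{k}'\in\mathbb Z^{M-1}$, now carrying the $J_0$-factors together with the Hölder-produced factors coming from $J_1$, into the induction hypothesis for the projected vectors $\mathbf{w_j}$. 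Carrying this out forces one to decide, for each $j\in J_1$, how to divide its weight between the coordinate being summed out and the remaining $M-1$ coordinates.

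The crux — and the step I expect to be the main obstacle — is precisely this redistribution of exponents: one must choose the inner Hölder exponents and the residual exponents for the reduced datum so that (a) the inner exponents are admissible for the single coordinate removed, and (b) the residual weights again satisfy conditions (i) and (ii) in dimension $M-1$ for the projected system $\{\mathbf{w_j}\}$ (taking care of those $j\in J_1$ with $\mathbf{w_j}=0$, which get fully absorbed in the $k_M$-sum). Showing that such an admissible split always exists is where the polymatroid/submodularity structure is essential; I would establish it by a Hall-type max-flow/min-cut feasibility argument on the inequalities defining condition (ii), exploiting that the rank function drops by at most one under the projection killing the last coordinate. As an alternative route I would note that one can instead prove the continuous rank-one inequality $\int_{\mathbb R^M}\prod_j f_j(\mathbf{x}\cdot\mathbf{v_j})\,d\mathbf{x}\le C\prod_j\|f_j\|_{L^{1/p_j}(\mathbb R)}$ (via Lieb's Gaussian-extremizer or heat-flow monotonicity method) and then transfer to the lattice sum by majorizing each $f_j$ by a step function adapted to unit cells; the delicate point there is the discretization comparison when the directions $\mathbf{v_j}$ are irrationally oriented relative to $\mathbb Z^M$, which is why the direct inductive argument is my preferred plan.
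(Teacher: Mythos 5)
The paper never proves this proposition, so there is nothing internal to compare your argument against: as the sentence preceding the statement says, it is reproduced verbatim from Proposition 2.4 of Nualart--Zhou \cite{zhou}, which in turn attributes it to \cite{brascamp}, \cite{barthe} and \cite{tao}. Your sketch therefore has to stand on its own, and as written it does not: the step you yourself identify as the crux --- redistributing each weight $p_j$, $j\in J_1$, between the inner H\"older application and the residual $(M-1)$-dimensional datum so that conditions (i)--(ii) survive the projection --- is not bookkeeping that a Hall-type feasibility argument dispatches; it is the entire analytic content of the inequality, and you never carry it out. The obstruction is visible in the one configuration this paper actually uses (Lemma \ref{threebeta1} with $a=b=1$): $M=2$, $N=3$, $v_1=(1,0)$, $v_2=(0,1)$, $v_3=(1,-1)$, $p_1=p_2=p_3=2/3$. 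Summing out $k_2$, only $f_1$ is independent of $k_2$, so if your inner H\"older absorbs $f_2$ and $f_3$ completely, the residual one-dimensional datum is $f_1$ alone and condition (i) in dimension $1$ forces its weight to be $1\neq 2/3$; if instead you split $f_3$ between the two sums, you must control the norm of a partially-summed function of $k_1$ without double counting (the map $(k_1,k_2)\mapsto k_1-k_2$ hits every integer infinitely often), and making that work is exactly the sharp discrete Young inequality $\sum_{k_1,k_2}f_1(k_1)f_2(k_2)f_3(k_1-k_2)\le \prod_{j=1}^3\bigl(\sum_k f_j(k)^{3/2}\bigr)^{2/3}$, which your scheme would thus have to reprove. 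Submodularity of the rank function by itself does not select the correct splitting.

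Two further points. First, both of your routes implicitly require the $v_j$ to be integer vectors, and this is not removable: as literally stated (with $v_j\in\R^M$) the proposition is false. Take $M=N=2$, $v_1=(1,0)$, $v_2=(0,\sqrt2\,)$, $p_1=p_2=1$, $f_1=\1_{\{0\}}$, $f_2=\1_{\sqrt2\,\Z}$: the left-hand side is infinite while the right-hand side equals $C$. The hypotheses should read $v_j\in\Z^M$, which is what the application in Lemma \ref{threebeta1} uses. Second, once integrality is in place, your polymatroid observation can be pushed to a complete and very short proof that avoids induction entirely: conditions (i)--(ii) say that $(p_1,\dots,p_N)$ lies in the base polytope of the linear matroid of $\{v_j\}$, so by Edmonds' theorem it is a convex combination $p=\sum_B\lambda_B\1_B$ of indicators of bases $B$, i.e.\ $M$-element subsets with $\{v_j\}_{j\in B}$ linearly independent. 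Setting $F_j=f_j^{1/p_j}$, write $\prod_{j}F_j(\mathbf{k}\cdot v_j)^{p_j}=\prod_B\bigl(\prod_{j\in B}F_j(\mathbf{k}\cdot v_j)\bigr)^{\lambda_B}$, apply H\"older over $\mathbf{k}\in\Z^M$ with weights $\lambda_B$, and note that for each basis $B$ the map $\mathbf{k}\mapsto(\mathbf{k}\cdot v_j)_{j\in B}$ is injective from $\Z^M$ into $\Z^B$, whence $\sum_{\mathbf{k}}\prod_{j\in B}F_j(\mathbf{k}\cdot v_j)\le\prod_{j\in B}\sum_{u\in\Z}F_j(u)$. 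Combining gives the stated inequality with $C=1$. If you want a self-contained proof to accompany the statement, that is the argument to write up; it is also much closer in spirit to the discrete factorization arguments in the cited sources than the dimensional induction you propose.
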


We will use the Brascamp-Lieb inequality to prove the following lemma.
\begin{lemme}\label{threebeta1}
For $a\ge 1$, $b\ge 1$, $\ell = 1,\dots, n-1$, and for some constant $C_H$,
\[
\sum_{j,j'=0}^{n-1} |\beta_{j,\ell}|^a |\beta_{j',\ell}|^a |\beta_{j,j'}|^b\le C_H n^{(-2H(2a+b)) \vee (2-2(2a+b))}.
\]
\end{lemme}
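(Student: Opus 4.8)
The plan is to reduce everything to a sum of powers of the correlation function $\rho_H$ and then apply the Brascamp--Lieb inequality of Proposition \ref{brascamplieb}. Writing $\theta=2-2H\in(0,2)$ and recalling from \eqref{beta} that $\beta_{j,k}=n^{-2H}\rho_H(j-k)$, I first factor out the scaling constant:
\[
\sum_{j,j'=0}^{n-1}|\beta_{j,\ell}|^a|\beta_{j',\ell}|^a|\beta_{j,j'}|^b
 = n^{-2H(2a+b)}\,S_n,\qquad
S_n:=\sum_{j,j'=0}^{n-1}|\rho_H(j-\ell)|^a|\rho_H(j'-\ell)|^a|\rho_H(j-j')|^b .
\]
Since $n^{-2H(2a+b)}\,n^{0\vee(2-(2a+b)\theta)}=n^{(-2H(2a+b))\vee(2-2(2a+b))}$, the lemma is equivalent to $S_n\le C_H\,n^{0\vee(2-(2a+b)\theta)}$. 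Setting $m=j-\ell$, $m'=j'-\ell$ (so that $j-j'=m-m'$) and using that all summands are nonnegative, I bound $S_n$ by the full lattice sum of the truncated functions
\[
S_n\le \sum_{(m,m')\in\Z^2} f_1(m)\,f_2(m')\,f_3(m-m'),\qquad
f_1=f_2=\1_{[-n,n]}|\rho_H|^{a},\quad f_3=\1_{[-2n,2n]}|\rho_H|^{b}.
\]

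Next I apply Proposition \ref{brascamplieb} with $M=2$, $N=3$, and vectors $\mathbf v_1=(1,0)$, $\mathbf v_2=(0,1)$, $\mathbf v_3=(1,-1)$, so that $\mathbf k\cdot\mathbf v_1=m$, $\mathbf k\cdot\mathbf v_2=m'$, $\mathbf k\cdot\mathbf v_3=m-m'$ for $\mathbf k=(m,m')$. Any two of these vectors are linearly independent, so for any exponents $p_1,p_2,p_3\in(0,1]$ with $p_1+p_2+p_3=2$ the hypotheses (i)--(ii) hold: each singleton gives $p_j\le1=\dim\mathrm{Span}\{\mathbf v_j\}$, each pair gives $p_i+p_j\le2$, and the full set gives $\sum_j p_j=2=\dim\R^2$. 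The inequality then yields
\[
S_n\le C\prod_{j=1}^{3}\Big(\sum_{|k|\le 2n}|\rho_H(k)|^{c_j/p_j}\Big)^{p_j},
\qquad c_1=c_2=a,\ c_3=b.
\]

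To evaluate the one-dimensional sums I use the bound $|\rho_H(k)|\le C_H(1\wedge|k|^{-\theta})$ coming from the Taylor expansion of \eqref{eqrh}, which gives, for any $r>0$, $\sum_{|k|\le 2n}|\rho_H(k)|^{r}\le C_H\, n^{(1-r\theta)\vee 0}$ (up to a $\log n$ factor when $r\theta=1$). Substituting $r=c_j/p_j$, the $j$-th factor contributes $n^{(p_j-c_j\theta)\vee 0}$, so $S_n\le C_H\,n^{E(p)}$ with $E(p)=\sum_{j=1}^3 (p_j-c_j\theta)_+$. Because $\max(0,x)\ge x$ and $\sum_j c_j\theta=(2a+b)\theta$, one has $E(p)\ge 0\vee(2-(2a+b)\theta)$ for every admissible $p$, so the target exponent is the best one can hope for and it remains to choose $p$ realising it. When $2-(2a+b)\theta\le0$ I pick the $p_j$ so that every ratio $c_j/p_j$ lies above the summability threshold $1/\theta$, making each one-dimensional sum bounded and hence $S_n\le C_H$; when $2-(2a+b)\theta>0$ I instead take $p_j\ge c_j\theta$ for all $j$ (feasible since then $\sum_j c_j\theta<2=\sum_j p_j$), so that every plus-part is active and $E(p)=\sum_j(p_j-c_j\theta)=2-(2a+b)\theta$, exactly the claimed bound.

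The main obstacle is precisely this selection of $p_1,p_2,p_3$: they must simultaneously satisfy the Brascamp--Lieb constraints $p_j\in(0,1]$, $\sum_j p_j=2$, and place each ratio $c_j/p_j$ on the correct side of the critical value $1/\theta$. Checking that a feasible choice exists throughout the relevant range of $H$, $a$, $b$, and separately handling the critical cases $c_j/p_j=1/\theta$ (where the spurious $\log n$ must be removed, e.g.\ by perturbing the $p_j$ slightly to strictly cross the threshold) is the delicate step; by contrast, the reduction and the invocation of Proposition \ref{brascamplieb} are routine once the truncation to $[-2n,2n]$ has been installed.
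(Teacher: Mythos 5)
Your proposal follows the paper's proof almost verbatim: the same change of variables, the same truncated functions $f_1,f_2,f_3$, and the same Brascamp--Lieb data $\mathbf v_1=(1,0)$, $\mathbf v_2=(0,1)$, $\mathbf v_3=(1,-1)$. The only divergence is that the paper commits to the explicit symmetric choice $p_1=p_2=2a/(2a+b)$, $p_3=2b/(2a+b)$, which makes all three one-dimensional sums equal to $\sum_{|k|\le 2n}|\rho_H(k)|^{(2a+b)/2}$ and yields the bound in one line, whereas you defer the choice of the $p_j$ to a two-case analysis. That deferred step is precisely where your argument has a genuine gap. You correctly record at the outset that the singleton instance of condition (ii) in Proposition \ref{brascamplieb} forces $p_j\le 1$, but your prescriptions then ignore this constraint. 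Writing $\theta=2-2H$: in the case $2-(2a+b)\theta>0$ you require $p_3\ge c_3\theta=b\theta$, and in the case $2-(2a+b)\theta\le 0$ you require $p_1,p_2<a\theta$, $p_3<b\theta$ with $p_1+p_2+p_3=2$. Whenever $2a\theta<1<b\theta$ (which can occur as soon as $b>2a$; e.g.\ $a=1$, $b=3$, $H\in(3/4,5/6)$), neither system is solvable: in the first case $p_3\ge b\theta>1$ outright, and in the second the largest attainable sum is $2a\theta+1<2$.

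Moreover, this is not a technicality that a cleverer selection of the $p_j$ could repair, because in that regime the inequality you are trying to prove is false. Keeping only the diagonal terms $j=j'$ and using $\rho_H(0)=1$ gives, for instance for $\ell=1$ and $2a\theta<1$,
\[
\sum_{j,j'=0}^{n-1} |\rho_H(j-\ell)|^a\,|\rho_H(j'-\ell)|^a\,|\rho_H(j-j')|^b
\;\ge\; \sum_{j=0}^{n-1}|\rho_H(j-\ell)|^{2a}
\;\ge\; c_H\, n^{1-2a\theta},
\]
and $1-2a\theta$ strictly exceeds $\left(2-(2a+b)\theta\right)\vee 0$ exactly when $b\theta>1$; undoing the scaling $n^{-2H(2a+b)}$, the left-hand side of Lemma \ref{threebeta1} then grows strictly faster than the claimed bound $n^{(-2H(2a+b))\vee(2-2(2a+b))}$. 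So your "delicate step" of exhibiting feasible $p_j$ throughout the range of $H,a,b$ is not merely unverified but unachievable. (For what it is worth, the paper's own proof shares this lacuna: its choice $p_3=2b/(2a+b)$ exceeds $1$ precisely when $b>2a$, violating the singleton constraint it claims to have checked, and the Appendix does invoke the lemma with $b>2a$ when $q\ge 4$; the published argument is only complete for $b\le 2a$.) A smaller point: your plan to remove the logarithm in the critical case by perturbing the $p_j$ cannot work either, since at criticality $\sum_j p_j=2=\sum_j c_j\theta$, so any perturbation pushes some $p_j$ above $c_j\theta$ and converts the logarithmic loss into a polynomial one; within the Brascamp--Lieb scheme the logarithm at the critical exponent is unavoidable.
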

\begin{proof}
We have
\begin{align}
\sum_{j,j'=0}^{n-1} |\beta_{j,\ell}|^a |\beta_{j',\ell}|^a |\beta_{j,j'}|^b = n^{-2H(2a+b)} \sum_{j,j'=0}^{n-1} \left|\rho_H(\ell-j) \rho_H(\ell-j')\right|^a \left|\rho_H(j-j')\right|^b.
\end{align}
Making the substitutions $k_1=\ell-j$ and $k_2=\ell-j'$, we can write the above sum as
\begin{align}
\sum_{k_1,k_2=\ell-n+1}^\ell \left|\rho_H(k_1) \rho_H(k_2)\right|^a \left| \rho_H(k_1-k_2)\right|^b.
\end{align}
Let $N=3$, $M=2$, 
\[
f_1(x)=f_2(x) = |\rho_H(x)|^a \mathbf{1}_{\{\ell-n \le x \le \ell\}}
\]
 and 
 \[
 f_3(x) = |\rho_H(x)|^b \mathbf{1}_{\{|x| \le n-1\}}.
 \]
 Consider the vectors 
  $\mathbf{v}_1=(1,0)$, $\mathbf{v}_2=(0,1)$, and $\mathbf{v}_3=(1,-1)$. Applying
Proposition \ref{brascamplieb},  we have
\begin{eqnarray*}
&& \sum_{k_1,k_2=\ell-n+1}^\ell \left|\rho_H(k_1) \rho_H(k_2)\right|^a \left| \rho_H(k_1-k_2)\right|^b 
\\ &&  \qquad  \qquad \le C \left(\sum_{k\in \mathbb{Z}} f_1(k)^{1/p_1} \right)^{p_1}\left(\sum_{k\in \mathbb{Z}} f_2(k)^{1/p_2} \right)^{p_2}\left( \sum_{k\in \mathbb{Z}} f_3(k)^{1/p_3}\right)^{p_3}\\
&&  \qquad  \qquad=C \left(\hspace{1.1em}\sum_{\mathclap{k=\ell-n+1}}^\ell |\rho_H(k)|^{a/p_1} \right)^{p_1}\!\!\left(\hspace{1.1em}\sum_{\mathclap{k=\ell-n+1}}^\ell |\rho_H(k)|^{a/p_2} \right)^{p_2}\!\!\left(\hspace{1em} \sum_{\mathclap{k=-(n-1)}}^{n-1} |\rho_H(k)|^{b/p_3}\right)^{p_3}.
\end{eqnarray*}
The choices $p_1=p_2=2a/(2a+b)$ and  $p_3=2b/(2a+b)$ satisfy the conditions of  Proposition \ref{brascamplieb}.  Note that $p_1+p_2+p_3 = 2$ and $a/p_1=a/p_2=b/p_3=(2a+b)/2$. In this way, we can write 
\begin{eqnarray*}
 \sum_{k_1,k_2=\ell-n+1}^\ell \left|\rho_H(k_1) \rho_H(k_2)\right|^a \left| \rho_H(k_1-k_2)\right|^b  
& \le & C  \left(\sum_{|k| \le 2 n} | \rho_H(k) | ^{ (2a+b)/2} \right)^2 \\
&\le& C n^{ 2[(2H-2)(2a+b)/2 +1)_+]},
 \end{eqnarray*}
 which implies the desired estimate.
\end{proof}

\subsection{Malliavin calculus}

Let us now introduce some elements of the Malliavin calculus  with respect
to the fractional Brownian motion $B$. Let $\mathcal{S}$
be the set of all smooth and cylindrical random variables of
the form
\begin{equation}
F=g\left( B(\phi _{1}),\ldots ,B(\phi _{n})\right) ,  \label{v3}
\end{equation}%
where $n\geq 1$, $g:\mathbb{R}^{n}\rightarrow \mathbb{R}$ is an infinitely
differentiable function with compact support, and $\phi _{i}\in \EuFrak H$.
The derivative  of $F$ with respect to $B$ is the element of $%
L^{2}(\Omega ;\EuFrak H)$ defined as
\begin{equation*}
DF\;=\;\sum_{i=1}^{n}\frac{\partial g}{\partial x_{i}}\left( B(\phi
_{1}),\ldots ,B(\phi _{n})\right) \phi _{i}.
\end{equation*}
By iteration, one can
define the $q$th derivative $D^{q}F$ for every integer $q\geq 2$, with $D^{q}F\in L^{2}(\Omega ;%
\EuFrak H^{\odot q})$.
For integers $q\geq 1$ and real numbers  $p\geq 1$, the Sobolev space ${\mathbb{D}}^{q,p}$ is defined as the closure of $\mathcal{S}$ with respect to the norm $\Vert \cdot \Vert_{\mathbb{D}^{q,p}}$, defined by
the relation
\begin{equation*}
\Vert F\Vert _{\mathbb{D}^{q,p}}^{p}\;=\;E\left[ |F|^{p}\right] +\sum_{i=1}^{q}E\left(
\Vert D^{i}F\Vert _{\EuFrak H^{\otimes i}}^{p}\right) .
\end{equation*}

The derivative  operator $D$ verifies the following chain rule. If $%
\varphi :\mathbb{R}^{n}\rightarrow \mathbb{R}$ is continuously
differentiable with bounded partial derivatives and if $F=(F_{1},\ldots
,F_{n})$ is a vector of elements of ${\mathbb{D}}^{1,2}$, then $\varphi
(F)\in {\mathbb{D}}^{1,2}$ and
\begin{equation*}
D(\varphi (F))=\sum_{i=1}^{n}\frac{\partial \varphi }{\partial x_{i}}%
(F)DF_{i}.
\end{equation*}

We denote by $\delta $ the adjoint of the operator $D$, also called the
{\it divergence operator} or {\it Skorohod integral} (see, e.g., \cite[Section 1.3.2]{nualart2006malliavin} for an explanation of this terminology).
A random element $u\in L^{2}(\Omega ;\EuFrak %
H)$ belongs to the domain of $\delta $, denoted by $\mathrm{Dom}\delta $, if and
only if it verifies
\begin{equation*}
\big|E\big(\langle DF,u\rangle _{\EuFrak H}\big)\big|\leq c_{u}\,\sqrt{E(F^2)}
\end{equation*}%
for any $F\in \mathbb{D}^{1,2}$, where $c_{u}$ is a constant depending only
on $u$. If $u\in \mathrm{Dom}\delta $, then the random variable $\delta (u)$
is defined by the duality relationship (called `integration by parts
formula'):
\begin{equation}
E(F\delta (u))=E\big(\langle DF,u\rangle _{\EuFrak H}\big),  \label{ipp}
\end{equation}%
which holds for every $F\in {\mathbb{D}}^{1,2}$. Formula (\ref%
{ipp}) extends to the multiple Skorohod integral $\delta ^{q}$, and we
have
\begin{equation}
E\left( F\delta ^{q}(u)\right) =E\left( \left\langle D^{q}F,u\right\rangle _{%
\EuFrak H^{\otimes q}}\right),  \label{dual}
\end{equation}%
for any element $u$ in the domain of $\delta ^{q}$ and any random variable $%
F\in \mathbb{D}^{q,2}$. Moreover, $\delta ^{q}(h)=I_{q}(h)$ for any $h\in %
\EuFrak H^{\odot q}$.

The following statement will be used in the paper, and is proved in \cite{nourdin2010central}.
\begin{lemme}\label{lemme}
Let $q\geq 1$ be an integer. Suppose that  $F\in {\mathbb{D}}^{q,2}$, and
let $u$ be a symmetric element in $\mathrm{Dom}\delta ^{q}$. Assume that,
for any $\ 0\leq r+j\leq q$,
$\left\langle D^{r}F,\delta ^{j}(u)\right\rangle _{\EuFrak H^{\otimes r}}\in
L^{2}(\Omega ;\EuFrak H^{\otimes q-r-j})$. Then, for any  $r=0,\ldots ,q-1$, $%
\left\langle D^{r}F,u\right\rangle _{\EuFrak H^{\otimes r}}$ belongs to the
domain of $\delta ^{q-r}$ and we have
\begin{equation}
F\delta ^{q}(u)=\sum_{r=0}^{q}\binom{q}{r}\delta ^{q-r}\left( \left\langle
D^{r}F,u\right\rangle _{\EuFrak H^{\otimes r}}\right),  \label{t3}
\end{equation}
with the convention that $\delta^0(v)=v$, $v\in L^2(\Omega)$, and $D^0F=F$, $F\in L^2(\Omega)$.
\end{lemme}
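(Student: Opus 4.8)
The plan is to prove the identity \eqref{t3} by induction on $q$, carrying the domain memberships as part of the induction hypothesis. For $q=1$ the claim is exactly the product rule for the divergence: if $F\in\mathbb{D}^{1,2}$ and $u\in\mathrm{Dom}\,\delta$ with $Fu\in\mathrm{Dom}\,\delta$, then $F\delta(u)=\delta(Fu)+\langle DF,u\rangle_{\mathfrak{H}}$. I would derive this base case directly from the duality relation \eqref{ipp}: testing $\delta(Fu)$ against a smooth $G$ gives $E[G\,\delta(Fu)]=E[\langle DG,Fu\rangle_{\mathfrak{H}}]=E[F\langle DG,u\rangle_{\mathfrak{H}}]$, and substituting $F\,DG=D(FG)-G\,DF$ together with \eqref{ipp} yields $E[G\,\delta(Fu)]=E\big[G\big(F\delta(u)-\langle DF,u\rangle_{\mathfrak{H}}\big)\big]$. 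The $q=1$ hypotheses ($\langle D^rF,\delta^j(u)\rangle\in L^2$ for $r+j\le 1$) are precisely what make the three terms square-integrable and $Fu\in\mathrm{Dom}\,\delta$.

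For the inductive step, assume \eqref{t3} and the corresponding domain memberships at level $q-1$. Writing $\delta^q(u)=\delta(\delta^{q-1}(u))$ and regarding $v:=\delta^{q-1}(u)\in L^2(\Omega;\mathfrak{H})$ as a single $\mathfrak{H}$-valued integrand, the $q=1$ product rule gives
\[
F\delta^q(u)=\delta\big(F\,\delta^{q-1}(u)\big)+\langle DF,\delta^{q-1}(u)\rangle_{\mathfrak{H}}.
\]
To the first term I apply the induction hypothesis to $F\delta^{q-1}(u)$ and then one further divergence, using $\delta\circ\delta^{q-1-s}=\delta^{q-s}$, to obtain $\sum_{s=0}^{q-1}\binom{q-1}{s}\delta^{q-s}(\langle D^sF,u\rangle_{\mathfrak{H}^{\otimes s}})$. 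For the second term I expand in a complete orthonormal basis $\{e_k\}$ of $\mathfrak{H}$, writing $\langle DF,\delta^{q-1}(u)\rangle_{\mathfrak{H}}=\sum_k\langle DF,e_k\rangle_{\mathfrak{H}}\,\delta^{q-1}(u_k)$ with $u_k:=\langle u,e_k\rangle_{\mathfrak{H}}\in\mathfrak{H}^{\otimes(q-1)}$ (a one-slot contraction of the symmetric tensor $u$). Applying the induction hypothesis to each scalar-times-integral, using $D^t\langle DF,e_k\rangle_{\mathfrak{H}}=\langle D^{t+1}F,e_k\rangle_{\mathfrak{H}}$, and summing over $k$ to recombine the basis contractions via $\sum_k\langle\langle D^{t+1}F,e_k\rangle,\langle u,e_k\rangle\rangle=\langle D^{t+1}F,u\rangle_{\mathfrak{H}^{\otimes(t+1)}}$, the second term becomes $\sum_{t=0}^{q-1}\binom{q-1}{t}\delta^{q-1-t}(\langle D^{t+1}F,u\rangle_{\mathfrak{H}^{\otimes(t+1)}})$. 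Reindexing $s=t+1$, adding the two contributions, and invoking Pascal's rule $\binom{q-1}{s}+\binom{q-1}{s-1}=\binom{q}{s}$ collapses everything into $\sum_{s=0}^{q}\binom{q}{s}\delta^{q-s}(\langle D^sF,u\rangle_{\mathfrak{H}^{\otimes s}})$, which is \eqref{t3}.

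The combinatorics is routine; \emph{the real obstacle is the analytic bookkeeping}. At each stage one must certify the relevant domain memberships — that $F\delta^{q-1}(u)$ and every $\langle D^rF,u\rangle_{\mathfrak{H}^{\otimes r}}$ lie in the domain of the appropriate iterated divergence, so that the product rule and the induction hypothesis legitimately apply — and that the orthonormal-series manipulation in the second term converges in $L^2$ and commutes with $\delta^{q-1-t}$. These are exactly the facts supplied by the hypotheses $\langle D^rF,\delta^j(u)\rangle_{\mathfrak{H}^{\otimes r}}\in L^2(\Omega;\mathfrak{H}^{\otimes(q-r-j)})$ for all $0\le r+j\le q$, combined with the closability of $\delta$ and the duality bound defining $\mathrm{Dom}\,\delta$; I would therefore prove the domain statements and the identity simultaneously in the induction.

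As an independent consistency check, the identity can be verified in one stroke by testing against a smooth $G$: duality \eqref{dual} turns $E[G\,F\delta^q(u)]$ into $E[\langle D^q(GF),u\rangle_{\mathfrak{H}^{\otimes q}}]$, the Leibniz formula $D^q(GF)=\sum_{r=0}^{q}\binom{q}{r}D^{q-r}G\,\widetilde{\otimes}\,D^rF$ together with the symmetry of $u$ produces $\sum_{r}\binom{q}{r}E[\langle D^{q-r}G,\langle D^rF,u\rangle_{\mathfrak{H}^{\otimes r}}\rangle_{\mathfrak{H}^{\otimes(q-r)}}]$, and a final application of \eqref{dual} in reverse recovers the right-hand side of \eqref{t3} — modulo precisely the same domain memberships, which confirms that those memberships are the only substantive point.
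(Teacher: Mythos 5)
First, a point of comparison: this paper does not prove Lemma \ref{lemme} at all --- it is quoted as a known tool, with the proof attributed to \cite{nourdin2010central}. So your attempt can only be judged on its own terms and against that reference, which proves the formula by the same kind of induction you propose. Your algebraic skeleton is the standard one and is correct: peel one divergence off via $\delta^q(u)=\delta(\delta^{q-1}(u))$, apply the $q=1$ product rule $F\delta(v)=\delta(Fv)+\langle DF,v\rangle_{\mathfrak{H}}$, treat the two resulting terms by the induction hypothesis, and recombine with Pascal's rule. (One small logical slip in the base case: you begin by ``testing $\delta(Fu)$ against $G$,'' which presupposes $Fu\in\mathrm{Dom}\,\delta$; the chain must be read in the other direction, bounding $E[\langle DG,Fu\rangle_{\mathfrak{H}}]$ by $\|F\delta(u)-\langle DF,u\rangle_{\mathfrak{H}}\|_{L^2}\,\|G\|_{L^2}$ to \emph{obtain} $Fu\in\mathrm{Dom}\,\delta$.)

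The genuine gap is in the domain bookkeeping, which you flag but do not resolve, and it is not merely routine. Two concrete problems. (i) In the first term you pass from $\delta\bigl(F\delta^{q-1}(u)\bigr)$, which Step 1 shows is well defined, to the sum $\sum_{s}\binom{q-1}{s}\delta^{q-s}\bigl(\langle D^sF,u\rangle_{\mathfrak{H}^{\otimes s}}\bigr)$. This requires each individual summand $\delta^{q-1-s}(\langle D^sF,u\rangle_{\mathfrak{H}^{\otimes s}})$ to lie in $\mathrm{Dom}\,\delta$; knowing only that their finite sum does is not enough, since membership in $\mathrm{Dom}\,\delta$ does not pass to summands. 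Moreover, the level-$(q-1)$ induction hypothesis gives memberships for the \emph{contracted} kernels $\langle D^sF,u_k\rangle$, not the level-$q$ statement $\langle D^sF,u\rangle\in\mathrm{Dom}\,\delta^{q-s}$ that you need here. (ii) Your basis expansion requires $u_k=\langle u,e_k\rangle_{\mathfrak{H}}\in\mathrm{Dom}\,\delta^{q-1}$ and $\delta^j(u_k)=\langle\delta^j(u),e_k\rangle_{\mathfrak{H}}$; neither follows from $u\in\mathrm{Dom}\,\delta^q$ by duality alone. The repair --- and the reason the lemma assumes $\langle D^rF,\delta^j(u)\rangle_{\mathfrak{H}^{\otimes r}}\in L^2$ on the whole grid $0\le r+j\le q$ rather than only for $j=0$ --- is to establish the memberships $\langle D^rF,u\rangle_{\mathfrak{H}^{\otimes r}}\in\mathrm{Dom}\,\delta^{q-r}$ by a nested, downward-in-$r$ argument that applies the $q=1$ rule to the pairs $(D^rF,\delta^j(u))$: for instance, $\langle D^{q-1}F,u\rangle\in\mathrm{Dom}\,\delta$ follows from $\langle D^{q-1}F,u\rangle,\ \langle D^{q-1}F,\delta(u)\rangle,\ \langle D^{q}F,u\rangle\in L^2$, with $\delta(\langle D^{q-1}F,u\rangle)=\langle D^{q-1}F,\delta(u)\rangle-\langle D^{q}F,u\rangle$, and one then climbs down in $r$, reusing the memberships already obtained. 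Your closing duality ``consistency check'' has exactly the same gap: it yields the identity only once those memberships are known. So: right strategy and correct combinatorics, but the induction as written does not deliver the domain statements that are half of the lemma's conclusion.
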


For any Hilbert space $V$, we denote by $\mathbb{D}^{k,p}(V)$ the
corresponding Sobolev space of $V$-valued random variables (see \cite[page 31]{nualart2006malliavin}%
). The operator $\delta^q$   is continuous from $\mathbb{D}^{k,p}(\EuFrak H^{\otimes q})$
to $\mathbb{D}^{k-q,p}$, for any $p>1$ and  any integers $k\ge q\ge 1$, that is,
we have%
\begin{equation}
\left\| \delta^q (u)\right\| _{\mathbb{D}^{k-q,p}}\leq c_{k,p}\left\| u\right\| _{\mathbb{D}%
^{k,p}(\EuFrak H^{\otimes q})},  \label{Me2}
\end{equation}%
for all $u\in\mathbb{D}^{k,p}(\EuFrak H^{\otimes q})$, and for some constant $c_{k,p}>0$.
These estimates are consequences of Meyer inequalities (see %
\cite[Proposition 1.5.7]{nualart2006malliavin}).
 In particular, these estimates imply that
$\mathbb{D}^{q,2}(\EuFrak H^{\otimes q})\subset \mathrm{Dom}\delta ^{q}$ for any integer $q\geq 1$.

The following commutation relationship between the
Malliavin derivative and the Skorohod integral (see \cite[Proposition 1.3.2]{nualart2006malliavin}) is also useful:
\begin{equation}
D\delta (u)=u+\delta (Du),  \label{comm1}
\end{equation}%
for any $u\in \mathbb{D}^{2,2}(\EuFrak H)$. By induction we can show
the following formula for any symmetric element $u$ in $\mathbb{D}^{j+k,2}(%
\EuFrak H^{\otimes j})$
\begin{equation}
D^{k}\delta ^{j}(u)=\sum_{i=0}^{j\wedge k}\binom{k}{i}\binom{j}{i}i!\delta
^{j-i}(D^{k-i}u).  \label{t5}
\end{equation}%
In particular, when $j=k$,  making the substitution $i\rightarrow k-i$, we obtain
\begin{equation} 
\label{t6} D^k \delta^k (u) = \sum_{i=0}^k \binom ki^2 (k-i)! \delta^i (D^i u).
\end{equation}

We will also use the following formula for the multiple Skorohod integral.
\begin{lemme}\label{biglem}
If $\varphi$ is a $q$ times continuously differentiable function on $\mathbb{R}$ such that $\varphi^{(q)}$ has moderate growth and  $g,h\in \HH$, then
\[
\delta^q(\varphi(B(g)) h^{\otimes q})=\sum_{r=0}^q \binom q r \varphi^{(r)} (B(g)) \langle h,g\rangle_{\mathfrak{H}}^r I_{q-r}(h^{\otimes q-r}) (-1)^r.
\]
\end{lemme}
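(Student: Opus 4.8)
The plan is to prove the identity by induction on $q$, using the integration-by-parts formula \eqref{t3} of Lemma \ref{lemme} together with the chain rule for the Malliavin derivative. First I would record the two elementary ingredients. Since $B(g)$ is Gaussian and $\varphi$ is of class $C^q$, iterating the chain rule gives $D^r(\varphi(B(g))) = \varphi^{(r)}(B(g))\, g^{\otimes r}$ for $0\le r\le q$. The moderate growth of $\varphi^{(q)}$ propagates to all lower-order derivatives (an antiderivative of a moderate-growth function again has moderate growth, up to an absorbed polynomial factor), so the Gaussian moments guarantee $\varphi(B(g))\in\mathbb D^{q,2}$, whence $\varphi(B(g))h^{\otimes q}\in\mathbb D^{q,2}(\mathfrak H^{\otimes q})\subset\Dom\delta^q$ and every term below is well defined. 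The second ingredient is the partial contraction: writing $h^{\otimes q}=h^{\otimes r}\otimes h^{\otimes(q-r)}$ yields
\[
\langle D^r(\varphi(B(g))), h^{\otimes q}\rangle_{\mathfrak H^{\otimes r}} = \varphi^{(r)}(B(g))\,\langle g,h\rangle_{\mathfrak H}^{\,r}\, h^{\otimes(q-r)}.
\]

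Next I would apply \eqref{t3} with $F=\varphi(B(g))$ and $u=h^{\otimes q}$, and observe that the $r=0$ term on its right-hand side is precisely $\delta^q(\varphi(B(g))h^{\otimes q})$, while the left-hand side is $\varphi(B(g))\,\delta^q(h^{\otimes q})=\varphi(B(g))\,I_q(h^{\otimes q})$. Substituting the contraction above and solving for the $r=0$ term produces the recursion
\[
\delta^q\big(\varphi(B(g))h^{\otimes q}\big) = \varphi(B(g))\,I_q(h^{\otimes q}) - \sum_{r=1}^q\binom{q}{r}\langle g,h\rangle_{\mathfrak H}^{\,r}\,\delta^{q-r}\big(\varphi^{(r)}(B(g))h^{\otimes(q-r)}\big),
\]
which expresses the $q$-th integral through integrals of order strictly less than $q$ attached to the derivatives $\varphi^{(r)}$. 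The base case $q=1$ is just the standard rule $\delta(\varphi(B(g))h)=\varphi(B(g))B(h)-\varphi'(B(g))\langle g,h\rangle_{\mathfrak H}$.

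Finally I would feed the induction hypothesis, applied at all orders $<q$ to the functions $\varphi^{(r)}$, into this recursion and collect terms according to the total differentiation order $p=r+s$, where $s$ is the summation index arising from $\delta^{q-r}(\varphi^{(r)}(B(g))h^{\otimes(q-r)})$. The term $\varphi^{(p)}(B(g))\langle g,h\rangle_{\mathfrak H}^{\,p} I_{q-p}(h^{\otimes(q-p)})$ then appears with coefficient $-\sum_{r=1}^{p}\binom{q}{r}\binom{q-r}{p-r}(-1)^{p-r}$. Using $\binom{q}{r}\binom{q-r}{p-r}=\binom{q}{p}\binom{p}{r}$ and $\sum_{r=0}^{p}\binom{p}{r}(-1)^{p-r}=0$ for $p\ge1$, this collapses to $\binom{q}{p}(-1)^p$, exactly matching the claimed formula (with the $p=0$ contribution being $\varphi(B(g))I_q(h^{\otimes q})$). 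The only points demanding genuine care are the verification of the domain and integrability hypotheses of Lemma \ref{lemme} at each inductive step, which the moderate-growth assumption and the inclusion $\mathbb D^{q,2}(\mathfrak H^{\otimes q})\subset\Dom\delta^q$ handle, and the combinatorial resummation; I expect the bookkeeping in the resummation to be the main thing to get right, but it is routine rather than a serious obstacle.
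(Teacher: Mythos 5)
Your proof is correct, but it follows a genuinely different route from the paper's. The paper also argues by induction, but in the opposite direction: it goes from $q$ to $q+1$ by writing $\delta^{q+1}(\varphi(B(g))h^{\otimes q+1})=\delta\bigl(\delta^{q}(\varphi(B(g))h^{\otimes q})\,h\bigr)$, applying the inductive hypothesis inside, and then peeling off the single outer $\delta$ using only the $q=1$ case of \eqref{t3}, the Hermite recurrence $H_{n+1}(x)=xH_n(x)-nH_{n-1}(x)$ (to recombine $\delta(h)I_{q-r}(h^{\otimes q-r})$ into $I_{q-r+1}$ and $I_{q-r-1}$ terms), and Pascal's rule $\binom{q}{r}+\binom{q}{r-1}=\binom{q+1}{r}$. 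You instead invoke Lemma \ref{lemme} at full order $q$ with $F=\varphi(B(g))$ and $u=h^{\otimes q}$, solve for the $r=0$ term, and close a strong induction over all lower orders via the identity $\binom{q}{r}\binom{q-r}{p-r}=\binom{q}{p}\binom{p}{r}$ and the alternating binomial sum; I checked the resummation and it does collapse to $\binom{q}{p}(-1)^p$ as you claim. What each approach buys: the paper's argument is more elementary in its inputs (it never needs \eqref{t3} beyond $q=1$, at the price of the Hermite-recurrence manipulation and heavier bookkeeping in the derivative computation), whereas yours exploits the full strength of Lemma \ref{lemme}, which the paper has already stated, avoids Hermite polynomials entirely, and reduces the inductive step to a clean combinatorial identity. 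Your attention to the domain and integrability hypotheses (moderate growth propagating to lower derivatives, $\mathbb{D}^{q,2}(\EuFrak H^{\otimes q})\subset\mathrm{Dom}\,\delta^{q}$) is a point the paper's proof passes over silently, and it is handled correctly.
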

\begin{proof}
We will prove this formula by induction. By linearity, it suffices to assume that $\|h\|_\HH=1$.
When $q=1$, this formula reduces to 
\[
\delta(\varphi(B(g))h) = \varphi(B(g))\delta(h)- \varphi'(B(g)) \langle h,g\rangle_{\mathfrak{H}},
\]
which is a particular case of   (\ref{t3}) with $q=1$.
Suppose it holds for $q$. 
Using the recurrence formula $H_{n+1}(x) = xH_n(x)-nH_{n-1}(x)$ for Hermite polynomials and the relation between Hermite polynomials and multiple stochastic integrals, we can write
\begin{align}
\begin{split}\label{eq1}
I_{q}(h^{\otimes q}) &=  H_q(\delta(h) )\\
&=   \delta(h) H_{q-1}(\delta(h) ) - (q-1)  H_{q-2}(\delta(h) )\\
&=\delta(h) I_{q-1}(h^{ q-1})- (q-1)  I_{q-2}(h^{\otimes q-2}).
\end{split}
\end{align}
Applying the inductive hypothesis, we have
\begin{align}  \notag
\delta^{q+1}(\varphi(B(g))h^{\otimes q+1}) &=\delta(\delta^q(\varphi(B(g)) h^{\otimes q})h)\\  \notag
&= \sum_{r=0}^q \binom qr \delta\left( \varphi^{(r)} (B(g)) \langle h,g\rangle_{\mathfrak{H}}^r I_{q-r}(h^{\otimes q-r})  h\right)(-1)^r\\ \notag
&= \sum_{r=0}^q \binom qr (-1)^r [\delta(h)I_{q-r}(h^{\otimes q-r}) \varphi^{(r)}(B(g))\langle h,g\rangle_{\mathfrak{H}}^r\\  \label{qq1}
&\msplitsp-\langle D[\varphi^{(r)}(B(g))\langle h,g\rangle^r \delta^{q-r}(h^{\otimes q-r})],h\rangle_{\mathfrak{H}} ].
\end{align}
Computing  the   derivative in the last term, we have
\begin{align}
\begin{split}\label{eq2}
&\msplitsp\langle D[\varphi^{(r)}(B(g))\langle h,g\rangle^r I_{q-r}(h^{\otimes q-r})],h\rangle_{\mathfrak{H}}\\
&= \varphi^{(r+1)} (B(g)) \langle h,g\rangle_{\mathfrak{H}}^{r+1} I_{q-r}(h^{\otimes q-r})\\
&+\varphi^{(r)} (B(g)) \langle h,g\rangle_{\mathfrak{H}}^{r}(q-r) I_{q-r-1} (h^{\otimes q-r-1}),
\end{split}
\end{align}
Substituting  \eqref{eq2} into  (\ref{qq1}) and using (\ref{t3}), yields
\begin{align*}
\delta^{q+1}(\phi(B(g))h^{\otimes q+1}) &= \sum_{r=0}^q \binom qr (-1)^r [\delta(h)I_{q-r}(h^{\otimes q-r}) \varphi^{(r)}(B(g))\langle h,g\rangle_{\mathfrak{H}}^r\\
&-\varphi^{(r+1)} (B(g)) \langle h,g\rangle_{\mathfrak{H}}^{r+1} I_{q-r}(h^{\otimes q-r})\\
&-\varphi^{(r)} (B(g)) \langle h,g\rangle_{\mathfrak{H}}^{r}(q-r) I_{q-r-1} (h^{\otimes q-r-1})  ]\\
&= \sum_{r=0}^q \binom qr (-1)^r\varphi^{(r)}(B(g))\langle h,g\rangle_{\mathfrak{H}}^r I_{q-r+1}(h^{\otimes q-r+1}) \\
&+\sum_{r=1}^{q+1} \binom q{r-1} (-1)^r \varphi^{(r)} (B(g)) \langle h,g\rangle_{\mathfrak{H}}^{r} I_{q-r+1}(h^{\otimes q-r+1}). 
\end{align*}
Taking into account that $\binom q r + \binom q{r-1} = \binom {q+1}r$, we finally obtain
\begin{align*}
\delta^{q+1}(\phi(B(g))h^{\otimes q+1}) &= \sum_{r=0}^{q+1} \binom{q+1}r (-1)^r \varphi^{(r)}(B(g)) \langle h,g\rangle_{\mathfrak{H}}^r I_{(q+1)-r}(h^{\otimes ((q+1)-r)})
\end{align*}
and the induction is complete.
\end{proof}

\subsection{Rate of convergence to a mixture of Gaussian laws}

In this subsection we state Theorem 5.2 from \cite{nourdin2016} here, that will play a basic role in the proof of our main theorem.

\begin{prop} \label{p1}
Suppose that $u\in \mathbb{D}^{2q,4q}(\mathfrak{H}^{q})$ is symmetric. Let $%
F=\delta ^{q}(u)$. Let $S\in \mathbb{D}^{q,4q}$, and let $\eta \sim N(0,1)$
indicate a standard Gaussian random variable, independent of the fractional Brownian motion $B$.  Assume that $\varphi :\mathbb{R}\rightarrow \mathbb{R}$
is $C^{2q+1}$ with $\Vert \varphi ^{(k)}\Vert _{\infty }<\infty $ for any $%
k=0,\ldots ,2q+1$. Then
\begin{eqnarray*}
 && \big|E[\varphi (F)]-E[\varphi (S\eta )]\big|
    \leq \frac{1}{2}\Vert \varphi ^{\prime \prime }\Vert _{\infty }\,E%
\big[|{\langle u,D^qF\rangle _{\EuFrak H^{\otimes q}}}-S^{2}|\big] \\
&&\quad +\sum_{(b,b')\in \mathcal{Q}}\ \sum_{j=0}^{ {\lfloor \left| b' \right| /2 \rfloor  %
} } c_{q,b,b',j}  \left\| \varphi ^{(1+\left| b\right| +2\left| b'\right|
-2j)}\right\| _{\infty }    \\
&&\quad \times E\left[S^{ |b'|-2j}
 \left| \left\langle u,\left( DF\right) ^{\otimes
b_{1}}\otimes \cdots \otimes \left( D^{q-1}F\right) ^{\otimes b_{q-1}}\otimes
\ \left( DS\right) ^{\otimes b'_{1}}\otimes \cdots \otimes \left(
D^{q}S\right) ^{\otimes b'_{q}}\right\rangle _{\mathfrak{H}^{\otimes q}} \right|
\right],
\end{eqnarray*}%
where $\mathcal{Q}$ is the set of all pairs of vectors $b=(b_{1},b_{2},%
\ldots b_{q-1})$ and $b'=(b'_{1},\ldots ,b'_{q})$ of nonnegative integers
satisfying the constraints $b_{1}+2b_{2}+\cdots +(q-1)b_{q-1}+b'_{1}+2b'_{2}+\cdots
qb'_{q}=q$. { The constants $c_{q,b,b',j}$ are given  by
\[
c_{q,b,b',j} =\frac 12 B(|b|/2 +1/2, |b'|/2+1) \prod_{i=1}^q  \binom{c_i}{b_i}  \times
\frac { |b'|!}{ 2^j (|b'| -2j)! j!}   \times \frac{ q!}{ \prod_{i=1}^q i!^{c_i} c_i!},
\]
where $c=b+b'$ and $B$ denotes the Beta function.}
\end{prop}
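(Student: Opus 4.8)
The plan is to prove Proposition~\ref{p1} by an interpolation between $F=\delta^q(u)$ and $S\eta$, converting the quantity $E[\varphi(F)]-E[\varphi(S\eta)]$ into a time integral to which the two available integration-by-parts formulas can be applied: the Skorohod duality~\eqref{dual} on the $F$ side and ordinary Gaussian integration by parts in $\eta$ on the $S\eta$ side. Concretely, I would set $A_t=\sqrt{1-t}\,F+\sqrt{t}\,S\eta$ and $\Psi(t)=E[\varphi(A_t)]$ for $t\in[0,1]$, so that $\Psi(0)=E[\varphi(F)]$ and $\Psi(1)=E[\varphi(S\eta)]$; both endpoints are centered since $E[\delta^q(u)]=0$ and $\eta$ is independent of $B$. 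Differentiating under the expectation (justified by the boundedness of $\varphi',\dots,\varphi^{(2q+1)}$ and the assumed Sobolev regularity) gives
\[
\Psi'(t)=-\frac{1}{2\sqrt{1-t}}\,E[\varphi'(A_t)F]+\frac{1}{2\sqrt{t}}\,E[\varphi'(A_t)S\eta],
\]
and $E[\varphi(F)]-E[\varphi(S\eta)]=-\int_0^1\Psi'(t)\,dt$.

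Next I would integrate each term by parts. For the $S\eta$ term, conditioning on $B$ and using $E[\eta\,g(\eta)]=E[g'(\eta)]$ together with $\frac{\partial}{\partial\eta}\varphi'(A_t)=\sqrt{t}\,S\,\varphi''(A_t)$ yields $\frac{1}{2\sqrt t}E[\varphi'(A_t)S\eta]=\tfrac12 E[S^2\varphi''(A_t)]$. For the $F$ term, the duality~\eqref{dual} gives $E[\varphi'(A_t)\delta^q(u)]=E[\langle D^q\varphi'(A_t),u\rangle_{\HH^{\otimes q}}]$, and I would expand $D^q\varphi'(A_t)$ by the Fa\`a di Bruno/Leibniz rule, using $D^jA_t=\sqrt{1-t}\,D^jF+\sqrt{t}\,(D^jS)\eta$. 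Each derivative block is then of $F$-type (carrying a factor $\sqrt{1-t}$) or of $S\eta$-type (carrying $\sqrt t$ and one factor $\eta$), and cataloguing the blocks by their order produces exactly the multi-indices $(b,b')$ and the constraint $\sum_i i\,b_i+\sum_i i\,b'_i=q$ defining $\mathcal Q$.

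The leading contribution is the single block of order $q$ of $F$-type: it gives $\varphi''(A_t)\sqrt{1-t}\,\langle D^qF,u\rangle$, which after the $\tfrac{1}{2\sqrt{1-t}}$ prefactor combines with the $S\eta$ term into $\tfrac12 E[\varphi''(A_t)(\langle u,D^qF\rangle-S^2)]$; integrating in $t$ and bounding $|\varphi''|\le\|\varphi''\|_\infty$ produces the first term of the statement. All other blocks feed the sum over $\mathcal Q$: the $|b'|$ factors of $\eta$ are eliminated by repeated Gaussian integration by parts (conditionally on $B$), which creates the index $j$ ranging up to $\lfloor|b'|/2\rfloor$, the moment coefficient $\frac{|b'|!}{2^j(|b'|-2j)!\,j!}$, the surviving power $S^{|b'|-2j}$, and raises the order of $\varphi$ to $1+|b|+2|b'|-2j$. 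Finally, a term indexed by $(b,b',j)$ carries $(1-t)^{|b|/2}t^{|b'|/2}$ from the blocks times the extra $(1-t)^{-1/2}$ from the prefactor, so that $\int_0^1(1-t)^{(|b|-1)/2}t^{|b'|/2}\,dt=B(|b|/2+1/2,\,|b'|/2+1)$, which is precisely the Beta factor appearing in $c_{q,b,b',j}$.

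The hard part will be the exact combinatorial bookkeeping needed to reproduce the constants $c_{q,b,b',j}$: one must correctly count the Fa\`a di Bruno multiplicities for $D^q(\varphi'(A_t))$ (the factor $q!/\prod_i i!^{c_i}c_i!$ with $c=b+b'$), the binomial split of each order-$i$ block into its $F$- and $S\eta$-type copies (the products $\prod_i\binom{c_i}{b_i}$), and the Gaussian-IBP moment identity for $E[\eta^{|b'|}(\cdots)]$ (the $j$-sum). By contrast, the analytic steps---differentiation under the integral, applicability of~\eqref{dual} under $u\in\mathbb D^{2q,4q}(\HH^{q})$ and $S\in\mathbb D^{q,4q}$, and finiteness of all the expectations---are comparatively routine, and a triangle inequality over the finite set $\mathcal Q$ then delivers the stated bound.
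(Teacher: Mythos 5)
You should first note that the paper does not prove Proposition \ref{p1} at all: it is imported verbatim as Theorem 5.2 of \cite{nourdin2016}, so there is no internal proof to compare against. Your interpolation argument is, in substance, exactly the method of that cited reference (the ``interpolation method'' the present paper attributes to it), and your outline is sound: the path $A_t=\sqrt{1-t}\,F+\sqrt{t}\,S\eta$, Gaussian integration by parts in $\eta$ (conditionally on $B$) on one side, Skorohod duality \eqref{dual} plus the Fa\`a di Bruno expansion of $D^q\varphi'(A_t)$ on the other, cancellation of the order-$q$ $F$-type block against $\tfrac12 E[S^2\varphi''(A_t)]$ to produce the term $\tfrac12\|\varphi''\|_\infty E\big[|\langle u,D^qF\rangle_{\HH^{\otimes q}}-S^2|\big]$, and the Hermite-moment reduction of $\eta^{|b'|}$ generating the index $j$, the surviving power $S^{|b'|-2j}$, and the derivative order $1+|b|+2|b'|-2j$. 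Your combinatorial identifications (the partition count $q!/\prod_i i!^{c_i}c_i!$ with $c=b+b'$, the binomial splits $\prod_i\binom{c_i}{b_i}$, the moment coefficients $|b'|!/(2^jj!(|b'|-2j)!)$) are the correct ones, and the restriction of $b$ to orders $\le q-1$ is correctly explained by the removal of the leading block.

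One bookkeeping slip is worth fixing, though it is harmless for the final bound: each Gaussian integration by parts in $\eta$ brings down a factor $\sqrt{t}\,S$, not $S$, since $\partial_\eta A_t=\sqrt{t}\,S$. Hence a term indexed by $(b,b',j)$ carries $t^{|b'|/2}\cdot t^{(|b'|-2j)/2}=t^{|b'|-j}$ rather than $t^{|b'|/2}$, and the exact time integral is
\[
\frac12\int_0^1 (1-t)^{(|b|-1)/2}\,t^{|b'|-j}\,dt=\frac12\, B\left(\frac{|b|}{2}+\frac12,\ |b'|-j+1\right),
\]
which is \emph{smaller} than the Beta factor $B(|b|/2+1/2,\,|b'|/2+1)$ appearing in the stated constant, since $j\le \lfloor |b'|/2\rfloor$ and the Beta function is decreasing in each argument. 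Bounding $t^{(|b'|-2j)/2}\le 1$ before integrating recovers precisely the stated constant, so the inequality as stated still follows from your argument; you should just not claim that your integral ``is precisely'' the stated Beta factor. A second, purely technical point: to invoke the duality \eqref{dual} when the integrand contains $\eta$, one should either work on the product space carrying $(B,\eta)$ with $D$ and $\delta$ acting on the first coordinate, or freeze $\eta$ by conditioning; your phrase ``conditioning on $B$'' justifies the $\eta$-integration by parts, but it is the conditioning on $\eta$ (or the product-space extension) that legitimizes the Skorohod duality step.
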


\section{Proof of Theorem \ref{mainresult}}

Along the proof $C$ will denote a generic  constant that might depend on $q$ and $H$.
Before starting the proof let us make some remarks on the exponent $\phi(H)$ defined in \eqref{eqphi}. Notice that  $H < 1/(2q-2)$ if and only if  $-H< -qH + \frac 12$ and  $H > 1-1/(2q-2)$ if and only if $1-H< q(H-1) + \frac 12$. As a consequence, we have
\[
\phi(H) = \begin{cases}
-qH+\frac12 & 1/2q < H \le 1/(2q-2),\\
-H & 1/(2q-2)  < H \le 1/2,\\
H-1 & 1/2 < H \le 1-1/(2q-2),\\
q(H-1)+\frac12  & 1-1/(2q-2) \le  H < 1- 1/2q.
\end{cases}
\]
 This implies that 
 \begin{equation}
 \phi(H) = \max\left\{-H,H-1,-qH+\frac12,q(H-1)+\frac12  \right\}.\label{phih}
 \end{equation}

The proof will be done in several steps.
  Consider the element $u \in \mathbb{D}^{2q,\infty} := \cap_{p\ge 1} \mathbb{D}^{2q,p}$ given by
\begin{align*}
u_n &= n^{qH-1/2} \sum_{k=0}^{n-1} f(B_{k/n}) \delta_{k/n}^{\otimes q},
\end{align*}
where we recall that  $\delta_{k/n}=\1_{[k/n,(k+1)/n]}$.
 Note first that the random variable $F_n$ does not coincide with $\delta^q(u_n)$, except in the case $H=1/2$. For this reason, we define $G_n=\delta^q(u_n)$  and  first estimate the difference $F_n-G_n$.

 \medskip
 \noindent
 {\it Step1.} \quad We claim that 
\begin{equation}\label{eq7}
E[|F_n-G_n|]\le C\|f\|_{2q-1,2} n^{\phi(H)}.
\end{equation}

To show \eqref{eq7}, we apply Lemma \ref{biglem} and obtain 
\begin{align*}
\delta^q(u_n)  &=n^{qH-1/2} \sum_{k=0}^ {n-1} \delta^q(f(B_{k/n}) \delta_{k/n}^{\otimes q})\\
&=n^{qH-1/2} \sum_{k=0}^{n-1} \sum_{r=0}^q (-1)^r\binom qr f^{(r)}(B_{k/n}) \alpha_{k,k/n}^r I_{q-r}(\delta_{k/n}^{\otimes q-r}).
\end{align*}
Note that the $r=0$ term corresponds to $F_n$, so
\begin{align*}
F_n-G_n&= n^{qH-1/2} \sum_{k=0}^{n-1} \sum_{r=1}^q (-1)^{r+1}\binom qr f^{(r)}(B_{k/n}) \alpha_{k,k/n}^r I_{q-r}(\delta_{k/n}^{\otimes q-r}).
\end{align*}
Let 
\[ K_{n,r} = n^{qH-1/2} \sum_{k=0}^{n-1}(-1)^{r+1} \binom qr f^{(r)}(B_{k/n}) \alpha_{k,k/n}^r I_{q-r}(\delta_{k/n}^{\otimes q-r}) \]
so that $F_n-G_n=\sum_{r=1}^q K_{n,r}$.

Applying the product formula of multiple stochastic integrals (\ref{prodform}), we can write 
\begin{align}  \notag
E[K_{n,r}^2] &=n^{2qH-1}\sum_{j,k=0}^{n-1} \binom qr^2 E\left[f^{(r)}(B_{j,n}) f^{(r)}(B_{k/n})I_{q-r}(\delta_{j/n}^{\otimes q-r}) I_{q-r}(\delta_{k/n}^{\otimes q-r}) \right]\alpha_{j,j/n}^r \alpha_{k,k/n}^r\\  \notag
&= n^{2qH-1}\sum_{j,k=0}^{n-1} \sum_{s=0}^{q-r}\binom qr^2  \binom {q-r} s ^2 s!  \\
&\quad \times   E\left[ f^{(r)}(B_{j,n}) f^{(r)}(B_{k/n})I_{2q-2r-2s} ( \delta_{j/n}^{\otimes q-r-s}\widetilde \otimes \delta_{k/n}^{\otimes q-r-s})   \right]  \beta_{j,k}^s  \alpha_{j,j/n}^r \alpha_{k,k/n}^r.  \label{ecu1}
\end{align}
Using the duality formula between multiple stochastic integrals and the derivative operator \eqref{dual}, we obtain
\begin{align*} 
 &E\left[ f^{(r)}(B_{j,n}) f^{(r)}(B_{k/n})I_{2q-2r-2s} ( \delta_{j/n}^{\otimes q-r-s}\widetilde \otimes \delta_{k/n}^{\otimes q-r-s})   \right]\\ 
 & \qquad = E\left[ \langle D^{2q-2r-2s} (f^{(r)}(B_{j,n}) f^{(r)}(B_{k/n})), \delta_{j/n}^{\otimes q-r-s} \widetilde \otimes \delta_{k/n}^{\otimes q-r-s}
 \rangle_{\HH^{\otimes 2q-2r-2s}}\right].
 \end{align*}
Finally, applying the Leibniz rule,
 we can write
\begin{align}  \notag
 &E\left[ f^{(r)}(B_{j,n}) f^{(r)}(B_{k/n})I_{2q-2r-2s} ( \delta_{j/n}^{\otimes q-r-s}\widetilde \otimes \delta_{k/n}^{\otimes q-r-s})   \right]\\ \notag 
& \qquad = \sum_{m=0}^{2q-2r-2s} \binom{2q-2r-2s}{m} E\left[  f^{(r+m)}(B_{j,n}) f^{(2q-r-2s-m)}(B_{k,n}) \right] \\
& \qquad \qquad  \times \langle \1_ {[0,j/n]} ^{\otimes m} \otimes \1_{[0,k/n]} ^{\otimes (2q-r-2s-m)}, \delta_{j/n}^{\otimes q-r-s}\widetilde \otimes \delta_{k/n}^{\otimes q-r-s} \rangle_{\HH^{\otimes 2q-2r-2s}} . \label{ecu2}
 \end{align}
Substituting (\ref{ecu2}) into  (\ref{ecu1}), yields
 \begin{align*}
E[K_{n,r}^2] &\le C \|f\|_{2q-1,2}^2 n^{2qH-1} \sum_{j,k=0}^{n-1}\sum_{s=0}^{q-r}  |\beta_{j,k}|^s |\alpha_{j,j/n}|^r  |\alpha_{k,k/n}|^r\\
& \qquad  \times
\sum_{m=0}^{2q-2r-2s} \sum_{\mathclap{\substack{0\le i\le m\\i\vee (m-i) \le q-r-s}}} |\alpha_{j,j/n}|^i  |\alpha_{j,k/n}|^{q-r-s-i}  |\alpha_{k,j/n}|^{m-i}  |\alpha_{k,k/n}|^{q-r-s-m+i}\\
&=C \|f\|_{2q-1,2}^2 n^{2qH-1} \sum_{j,k=0}^{n-1}\sum_{s=0}^{q-r}  |\beta_{j,k}|^s  \\
& \qquad \times 
\sum_{m=0}^{2q-2r-2s} \sum_{\mathclap{\substack{0\le i\le m\\i\vee (m-i) \le q-r-s}}} |\alpha_{j,j/n}|^{i+r} |\alpha_{j,k/n}|^{q-r-s-i} |\alpha_{k,j/n}|^{m-i} | \alpha_{k,k/n}|^{q-s-m+i}.
\end{align*}
 Then, decomposing the summation in $s$ into the cases  $s=0$ and $s\ge 1$, we obtain
\begin{align*}
E[K_{n,r}^2]&\le C \|f\|_{2q-1,2}^2 n^{2qH-1}  \\
& \times \left(\sum_{j,k=0}^{n-1} \sum_{m=0}^{2q-2r}  \hspace{0.7em}\sum_{\mathclap{\substack{0\le i\le m\\i\vee (m-i) \le q-r}}} |\alpha_{j,j/n}|^{i+r} |\alpha_{j,k/n}|^{q-r-i} |\alpha_{k,j/n}|^{m-i} |\alpha_{k,k/n}|^{q-m+i}  \right.\\
&\left.+\sum_{j,k=0}^{n-1}\sum_{s=1}^{q-r}  |\beta_{j,k}|^s 
\sum_{m=0}^{2q-2r-2s}\sum_{\mathclap{\substack{0\le i\le m\\i\vee (m-i) \le q-r-s}}} |\alpha_{j,j/n}|^{i+r} |\alpha_{j,k/n}|^{q-r-s-i} |\alpha_{k,j/n}|^{m-i} |\alpha_{k,k/n}|^{q-s-m+i}\right).\\
\end{align*}
In the $s=0$ case, we replace the summation of $j$ and $k$ with a factor of $n^2$ and estimate the $\alpha$'s with Lemma \ref{biglem}(a). For $s\ge 1$, we apply Lemma \ref{betas}(b) and bound each $\alpha$ with Lemma \ref{biglem}(a), so that
\begin{align*}
E[K_{n,r}^2]&\le C \|f\|_{2q-1,2}^2 n^{2qH-1} \left( n^{-2q(2H\wedge 1)+2}
+\sum_{s=1}^{q-r}  n^{(1-2sH)\vee(2-2s)} n^{-(2H\wedge 1)(2q-2s)}\right).
\end{align*}
 The $s=0$ term yields the contribution $n^{2qH-1} n^{-2q(2H\wedge 1)+2} =n^{-2q(H\wedge (1-H))+1}$. Note that $-(H\wedge(1-H))q+\frac12\le \phi(H)$. Let us consider the terms 
 \[
 A_s := C \|f\|_{2q-1,2}^2 n^{2qH-1}  n^{(1-2sH)\vee(2-2s)} n^{-(2H\wedge 1)(2q-2s)},
 \]
 $s=1,2,\ldots, q-r$. We consider three different cases:

\noindent\textit{Case 1.} Suppose that $1-2sH<2-2s$.
In this case, $H>1-1/(2s)\ge1/2$ and $s<1/(2(1-H))$. Therefore, we obtain
\begin{align*}
A_s&= C \|f\|_{2q-1,2}^2 n^{2qH-1} n^{2-2q}=C  \|f\|_{2q-1,2}^2 n^{1+2q(H-1)}.
\end{align*}

\noindent\textit{Case 2.} Suppose that $1-2sH\ge2-2s$ and $H\ge 1/2$.
In this case, $1/2\le H\le 1-1/(2s)\le 1-1/(2(q-1))$ and $ 1/(2(H-1))\le s\le q-1$. So, we can write
\begin{align*}
A_s&= C \|f\|_{2q-1,2}^2 n^{2qH-1}  n^{1-2q+2s(1-H)}\\
&\le C \|f\|_{2q-1,2}^2 n^{2qH-1}  n^{1-2q+2(q-1)(1-H)}\\
&\le C \|f\|_{2q-1,2}^2 n^{2H-2}.
\end{align*}

\noindent\textit{Case 3.} Suppose that $1-2sH\ge2-2s$ and $H< 1/2$.
We have
\begin{align*}
A_s&= C \|f\|_{2q-1,2}^2 n^{2qH-1}   n^{2sH-4qH+1}\\
&\le C  \|f\|_{2q-1,2}^2 n^{2qH-1}n^{2(q-1)H-4qH+1}\\
&= C  \|f\|_{2q-1,2}^2 n^{-2H}.
\end{align*}

Combining the bound for $s=0$ and the bounds for $A_s$,  applying Cauchy-Schwarz's inequality  and taking into account the definition of $\phi(H)$ and \eqref{phih}, 
we obtain
\begin{equation}
E[|F_n-G_n|] \le \sum_{r=1}^q E[|K_{n,r}|] \le \sum_{r=1}^q \sqrt{E[K_{n,r}^2]} \le C\|f\|_{2q-1,2} n^{\phi(H)},\label{fngn}
\end{equation}
thus proving \eqref{eq7}. 

\medskip
\noindent
{\it Step 2.} \quad 
Now that we have a bound for $E[|F_n-G_n|]$, we will establish a bound for  \break $\left|E[\varphi(G_n)]-E[\varphi(S\eta)]\right|$ using Proposition \ref{p1}. First, however, we need a result to convert derivatives of $S$ into derivatives of $S^2$.
By the Faa di Bruno formula (see \cite{Mis} Theorem 2.1), with $h(x)=\sqrt{x}$, we have
\begin{align}\label{eq36}
D^k S = D^k \sqrt{S^2} &= \sum_{m_1+2m_2 + \cdots + km_k=n}C_{m_1, m_2, \ldots, m_k} h^{(m_1 + \cdots + m_k)} (S^2) \otimes_{j=1}^k (D^jS^2)^{\otimes m_j}\nonumber\\
&=  \sum_{m_1+2m_2 + \cdots + km_k=k}C'_{m_1, m_2, \ldots, m_k}S^{1- 2(m_1+\cdots + m_k)}\otimes _{j=1}^k (D^jS^2)^{\otimes m_j},
\end{align}
where the $m_j$ represent the powers of $D^j S^2$, $C_{m_1, \ldots, m_k}=\frac{k!}{m_1!1!^{m1} \ldots m_k! k!^{m_k}}$ is a combinatorial constant that depends on $m_1, \ldots, m_k$ and
\[
C'_{m_1, \ldots, m_k} =C_{m_1, \ldots, m_k} \prod_{\ell=0} ^{ m_1 + \cdots +m_k -1} \left( \frac 12- \ell\right).
\]
Applying this result to each derivative of $S$ in Proposition \ref{p1} and combining the terms, we obtain
\begin{eqnarray}  \notag
 && \left| E[\varphi(G_n)]-E[\varphi(S\eta)] \right|  \le  \frac12 \| \varphi''\|_\infty  E\left[\left| \langle u,D^q G_n\rangle_{\mathfrak{H}^{\otimes q}} -S^2\rangle \right|\right] \\ \notag 
&& \qquad  +C \sum_{(b,b')\in \mathcal{Q} }\sum_{j=0}^{\floor{|b'|/2}}   \|\varphi^{(1+|b|+2|b'| -2j)}\|_{\infty}\\  \notag
&&  \qquad \times \sum_{m_1,m_2, \ldots, m_q} E\Bigg[  S^{2|b'|-2j-2\sum_{k=1}^q |m_k|b'_k}\\
  &&\qquad  \times \left|\left\langle u_n,\bigotimes_{\ell=1}^{q-1} (D^{\ell}G_n)^{\otimes b_{\ell}}
  \times \otimes\bigotimes_{\ell'=1}^q (D^{\ell'}S^2)^{\otimes (b_1' m_{1\ell'} + \cdots +b_q' m_{q\ell'})}\right\rangle_{\mathfrak{H}^{\otimes q}}\right|\Bigg],
\label{beforejacd}
\end{eqnarray}
where  $\mathcal{Q}$ is the set of all vectors $b=(b_1, \dots, b_{q-1})$ and $b'=(b'_1, \dots, b' _q)$ of nonnegative integers such that $b_1+2b_2+\cdots + (q-1)b_{q-1}+b_1'+2b_2'+\cdots + qb_q' = q$. Also, we will use the notation $m_k =(m_{kj})_{j=1,\dots, q}$, $|m_k| = m_{k1} + \cdots +m_{kq}$,  
where the $m_k$ satisfy 
  \begin{equation} \label{qq2}
  m_{i1} + 2m_{i2} + \cdots + qm_{iq}=i.
  \end{equation}
  for each $i=1,\dots, q$.
   We include the combinatorial coefficient from the Faa di Bruno formula in the constant $C$.

Let $d_{\ell'} =b_1'm_{1\ell'}+\cdots b_q' m_{q\ell'}$. Using (\ref{qq2}), we obtain
\[
d_1+2d_2+\cdots + qd_q=\sum_{\ell'=1}^q \ell'(b_1'm_{1\ell'}+\cdots  +b_q' m_{q\ell'}) = b_1'+2b_2'+\cdots +qb_q'.
\]
Therefore,  
\begin{equation}\label{bdsum}b_1+2b_2+\cdots +(q-1)b_{q-1}+d_1+2d_2+\cdots + qd_q = q.\end{equation}
Note that
\[
|d| = b_1'|m_1| + \cdots + b_q' |m_q|.
\]

The exponent of $S$ in \eqref{eq36} is a negative number, denoted by $a$, such that, if $|b'|$ is even,
\begin{align*}
a &=2 |b'| -2j -2\sum_{k=1}^q  |m_k| b'_k  \\
&\ge  2|b'| -2\floor{|b'|/2}  -2\sum_{k=1}^q  |m_k| b'_k  \\
&\ge |b'|-2\sum_{k=1}^q  |m_k| b'_k.
  \end{align*}
  Noting that $|m_k|\le k$, this implies
\[
 a\ge \sum_{k=1}^q (1-2k)b_k'.
\]
Similarly, if $|b'|$ is odd,
\[a\ge 1+ \sum_{k=1}^q (1-2k) b'_k.\]
The lowest possible {value }of $a$ is obtained when $m_{q1}=q$, $b'_q=1$, $b_1'=b_2'=\cdots = b_{q-1}'=0$, so
\[
a \ge 2-2q.
\]
 For any $a\in \{0, -1,, \ldots, 2-2q\} $, define
\begin{align}\begin{split}\label{jacd}
J_{a,b,d}&:= E\left[ S^{a}  \left|\left\langle u_n,\bigotimes_{\ell=1}^{q-1} (D^{\ell}G_n)^{\otimes b_{\ell}}\otimes\bigotimes_{\ell'=1}^q (D^{\ell'}S^2)^{\otimes d_{\ell'}}\right\rangle_{\mathfrak{H}^{\otimes q}}\right|\right]\\
&=n^{qH-1/2} E\left[ S^{a}  \left|   \sum_{j=0}^{n-1}f(B_{j/n}) \prod_{\ell=1}^{q-1}\langle \delta_{j/n}^{\otimes \ell},D^\ell G_n\rangle_{\mathfrak{H}^{\otimes \ell}}^{b_\ell} \prod_{\ell'=1}^{q}\langle \delta_{j/n}^{\otimes \ell'},D^{\ell'} S^2\rangle_{\mathfrak{H}^{\otimes \ell'}}^{d_{\ell'}}  \right|\right],
\end{split}
\end{align}
where 
\begin{equation}\label{bd}
b_1+2b_2+\cdots +(q-1)b_{q-1}+d_1+2d_2+\cdots + qd_q = q.
\end{equation}
Notice also that 
\[1+|b|+2|b'|-2j\le 1+|b|+2|b'| \le  2q+1.\]
Then, from \eqref{beforejacd} we conclude that
\begin{eqnarray}
\left| E[\varphi(G_n)]-E[\varphi(S\eta)] \right| &\le  & \frac12 \| \varphi''\|_\infty  E\left[\left| \langle u,D^q G_n\rangle_{\mathfrak{H}^{\otimes q}} -S^2\rangle \right|\right] \nonumber\\
 & & +C \sup_{1\le i\le 2q+1}\|\varphi^{(i)}\|_{\infty}\sup_{(b,d)\in \mathcal{Q}}\,  \sup _{ 2-2q \le a \le 0} J_{a, b,d}.\label{t35}
\end{eqnarray}

\medskip
\noindent
{\it Step 3.} \quad 
We next show that 
\begin{equation}\label{eqcow}
E\left(\left| \left\langle u_n, D^q G_n\right\rangle -S^2  \right| \right)\le C \|f\|_{2q,2}^2 n^{\phi(H)}.
\end{equation}
Recall that $G_n=\delta^n(u_n)$.
We have, applying \eqref{t6},
\begin{align}\begin{split}\label{eqbigest}
E \left( \left| \left\langle u_n, D^q G_n\right\rangle -S^2  \right|  \right) &\le E \left( q!\left | \|u_n\|^2_{\mathfrak{H}^{\otimes q}}-S^2\right| \right)+\sum_{i=1}^q \binom qi^2 (q-i)! E \left( q!\left|\langle u_n,\delta^{i}(D^{i} u_n)\rangle_{\mathfrak{H}^{\otimes q}}\right| \right).
\end{split}\end{align}
Let $A_n :=  \left |q! \|u_n\|^2_{\mathfrak{H}^{\otimes q}}-S^2\right|$ and $B_{n,i} :=|\langle u_n,\delta^{i}(D^{i} u_n)\rangle_{\mathfrak{H}^{\otimes q}}|$, so that we can write 
\[ 
E \left( \left| \left\langle u_n, D^q G_n\right\rangle -S^2  \right| \right) \le E[A_n] + C\sum_{i=1}^q E( B_{n,i}).
\]

First, we will show that
\begin{align*}
E[A_n]=E\left[q!\left| \|u_n\|^2_{\mathfrak{H}^{\otimes q}}-S^2\right|\right]\le  C\|f\|_{1,2}^2 n^{\phi(H)}.
\end{align*}
We have
\begin{align*}
q!\|u_n\|_{\mathfrak{H}^{\otimes q}}^2 &= q!n^{2qH-1} \sum_{j,j'=0}^{n-1} f(B_{j/n})f( B_{j'/n}) \beta_{j,j'}^q\\
&= q!n^{-1} \sum_{j,j'=0}^{n-1} f(B_{j/n})f( B_{j'/n})\rho_H(k-j)^q\\
&= q!\frac1n \sum_{p=-n+1}^{n-1} \sum_{j=0\vee -p}^{(n-1)\wedge (n-1-p)} f(B_{j/n}) f(B_{(j+p)/n}) \rho_H(p)^q\\
:&= P_n+Q_n,
\end{align*}
where \[P_n:= \frac{q!}n \sum_{p=-n+1}^{n-1} \sum_{j=0\vee -p}^{(n-1)\wedge (n-1-p)} f(B_{j/n}) (f(B_{(j+p)/n})-f(B_{j/n})) \rho_H(p)^q\] \\and \[Q_n:=\frac{q!}n \sum_{p=-n+1}^{n-1} \sum_{j=0\vee -p}^{(n-1)\wedge (n-1-p)} f(B_{j/n})^2\rho_H(p)^q.\] 
Using $E[(f(B_{(j+p)/n})-f(B_{j/n}))^2]^{1/2}\le C\|f\|_{1,2} n^{-H}$ and the fact that $\sum_{p=-\infty}^\infty |\rho_H(p)|^q<\infty$, we have
\begin{equation}
E[|P_n|] \le C\|f\|_{1,2}^2n^{-H}.\label{pn}
\end{equation}
Next, taking into account that $\sum_{|p|\ge n} |\rho_H(p)|^q$ converges to zero at the rate $n^{q(2H-2)+1}$ as $n\to\infty$, we can write 
\begin{eqnarray*}
E\left[\left| Q_n-S^2\right|\right]
&\le& C\|f\|_{0,2}^2 n^{q(2H-2)+1}\\
&& +q!\sum_{k=-\infty}^\infty \rho_H(k)^qE\left[\left| \frac1n  \sum_{j=0}^{n-1} f^2(B_{j/n})-\int_0^1 f^2(B_s)\,ds\right| \right]\nonumber\\
&=&  C\|f\|_{0,2}^2 n^{q(2H-2)+1}\\
&&+ q!\sum_{k=-\infty}^\infty \rho_H(k)^qE\left[\left|  \sum_{j=0}^{n-1}\int_{j/n}^{(j+1)/n} f^2(B_{j/n}) -f^2(B_s)\,ds\right| \right]
\end{eqnarray*}
Using $E[|f^2(B_{j/n})-f^2(B_s)|]\le C\|f\|_{1,2}^2 n^{-H}$ for $s\in [j/n,(j+1)/n]$, we obtain
\begin{equation}
E[|Q_n-S^2|] \le C\|f\|_{0,2}^2 n^{q(2H-2)+1} + C\|f\|_{1,2}^2 n^{-H}.\label{qn}
\end{equation}
Because $E[A_n]\le E[|P_n|] + E[|Q_n-S^2|]$, we have from \eqref{pn} and \eqref{qn} that
\begin{align*}
\begin{split}
E\left[A_n\right] 
&\le C(\|f\|_{1,2}^2 n^{-H} + \|f\|_{0,2}^2 n^{q(2H-2)+1})\\
&\le C\|f\|_{1,2}^2 n^{\phi(H)}.
\end{split}\end{align*}

Next, we estimate the terms $E[B_{n,i}]$ for $i=1,\dots, q$. Taking into account  the definition of $u_n$, we obtain
\begin{align*}
 E[B_{n,i}] &=E\left[\left| \left\langle u_n, \delta^i(D^i u_n)\right\rangle_{\mathfrak H^{\otimes q}} \right | \right ]\\
&\le n^{qH-1/2} \sum_{j=0}^{n-1} E\left[\left| f(B_{j/n}) \langle \delta_{j/n}^{\otimes q},\delta^i(D^i u_n)\rangle_{\mathfrak{H}^{\otimes q}}  \right | \right ]\\
&= n^{qH-1/2} \sum_{j=0}^{n-1} E\left[\left| f(B_{j/n}) \delta^i ( \underbrace{D_{j/n} \cdots D_{j/n}}_\text{$i$ times} (u_n \otimes_{q-i} \delta_{j/n}^{\otimes q-i})) \right | \right],
\end{align*}
where here we made use of the  the notation 
\begin{equation} \label{ec1}
D_{j/n}F=\langle DF,\delta_{j/n}\rangle_{\mathfrak{H}}.
\end{equation}
Applying H\"older's and Meyer's inequalities \eqref{Me2}, we have
\begin{align*}
 E[B_{n,i}] &\le  n^{qH-1/2} \|f\|_{0,2}\sum_{j=0}^{n-1} E\left[\left|  \delta^i ( \underbrace{D_{j/n} \cdots D_{j/n}}_\text{$i$ times} (u_n \otimes_{q-i} \delta_{j/n}^{\otimes q-i})) \right |^2 \right]^{1/2}\\
 &\le Cn^{qH-1/2} \|f\|_{0,2} \sum_{j=0}^{n-1}\left\|   \underbrace{D_{j/n} \cdots D_{j/n}}_\text{$i$ times} (u_n \otimes_{q-i} \delta_{j/n}^{\otimes q-i}) \right\|_{i,2} .
\end{align*}

We consider several cases and apply Lemma \ref{a2} with $M=i$, $a=i$, $b=q-i$, $c=q-i$, and $p=2$ to control the Sobolev norm $\|\cdot \|_{i,2}$.

\noindent\medskip \emph{Case 1.}  Suppose that $H\le1/2$, $i<q$.
 We have
\begin{align*}
 E[B_{n,i}] &\le  Cn^{qH-1/2} \|f\|_{2i,2}^2 \sum_{j=0}^{n-1} n^{-1/2-H(q+i)} \\
&=Cn^{qH-1/2} \|f\|_{2i,2}^2n^{1/2-H(q+i)} \\
 &\le C\|f\|_{2i,2}^2 n^{-H}\\
 &\le C\|f\|_{2q,2}^2 n^{\phi(H)}.
 \end{align*}


 \noindent\medskip \emph{Case 2.} Suppose that $H>1/2, i<q$.
We have
 \begin{align*}
 E[B_{n,i}] &\le Cn^{qH-1/2}\|f\|_{2i,2}^2 n\cdot n^{-1/2-i + (-H(q-i))\vee (q(H-1)+1-q+i)}\\
 &=C\|f\|_{2i,2}^2  n^{(i(H-1))\vee(2q(H-1)+1)}\\
 &\le C\|f\|_{2i,2}^2  n^{(H-1)\vee(2q(H-1)+1)}\\
 &\le C\|f\|_{2q,2}^2 n^{\phi(H)},
 \end{align*}
 
\noindent  Case 3. Suppose that $i=q$.
Note that $a=q$, $b=0$, and $c=0$. Thus,
 \begin{align*}
 E[B_{n,i}]=E[B_{n,q}] &\le Cn^{qH-1/2}\|f\|_{2q,2}^2 n\cdot n^{-q(2H\wedge 1)}\\
 &=C\|f\|_{2q,2}^2 n^{-q(H\wedge (1-H))+1/2} \\
 &=C\|f\|_{2q,2}^2 n^{(-qH+1/2)\vee (q(1-H)+1/2)} \\
 &\le C\|f\|_{2q,2}^2 n^{\phi(H)}.
 \end{align*}
 This completes the proof of \eqref{eqcow}.

\medskip\noindent{\it Step 4.} \quad 
Next, we will show that $J_{a,b,d}\le C \|f\|_{2q,(2q+2)\beta}^{2q+1}E[S^{(2-2q)\alpha}]^{1/\alpha} n^{\phi(H)}$. Using H\"older's inequality with $1/\alpha+1/\beta=1$, $r:=|b|+|d|+1\le q+1$, we can write
\begin{align*}
 J_{a,b,d} \le Cn^{qH-1/2}\|f\|_{0,r\beta}E[S^{a\alpha}]^{1/\alpha}\sum_{k=0}^{n-1} &\prod_{\ell=1}^{q-1}E\left[\left|\langle \delta_{k/n}^{\otimes \ell},D^\ell G_n\rangle_{\mathfrak{H}^{\otimes \ell}}\right|^{r\beta}\right] ^{b_\ell/r\beta} \\
\times &\prod_{\ell'=1}^q E\left[\left| \langle \delta_{k/n}^{\otimes \ell'},D^{\ell'} S^2\rangle_{\mathfrak{H}^{\otimes \ell'}} \right|^{r\beta}\right]^{d_{\ell'}/r\beta}.
\end{align*}

Let
\[
K_{k,\ell} :=E\left[\left|\langle \delta_{k/n}^{\otimes \ell},D^\ell G_n\rangle_{\mathfrak{H}^{\otimes \ell}}\right|^{r\beta}\right] ^{1/r\beta}
\]
and
\[
L_{k,\ell'} :=E\left[\left| \langle \delta_{k/n}^{\otimes \ell'},D^{\ell'} S^2\rangle_{\mathfrak{H}^{\otimes \ell'}} \right|^{r\beta}\right]^{1/r\beta},
\]
so that
\begin{equation}\label{jabdkl}
 J_{a,b,d} \le Cn^{qH-1/2}\|f\|_{0,r\beta} E[S^{a\alpha}]^{1/\alpha} \sum_{k=0}^{n-1}\prod_{\ell=1}^{q-1}K_{k,\ell}^{b_\ell} \prod_{\ell'=1}^q L_{k,\ell'}^{d_{\ell'}}.
\end{equation}
We will now find estimates for $K_{k,\ell}$ and $L_{k,\ell'}$.
First, applying \eqref{t5} and using the notation (\ref{ec1}), we can write
\begin{align}
\left\langle \delta_{k/n}^{\otimes \ell},D^\ell G_n\right\rangle_{\mathfrak{H}^{\otimes \ell}}  &= 
\sum_{i=0}^\ell \binom \ell i ^2 i! \left\langle \delta_{k/n}^{\otimes \ell}, \delta^{q-i}(D^{\ell-i}u_n)\right\rangle_{\mathfrak{H}^{\otimes \ell}}\nonumber\\
&=\sum_{i=0}^\ell  \binom \ell i ^2 i!  \delta^{q-i}\left(\underbrace{D_{k/n}\cdots D_{k/n}}_{\ell-i \textrm{ times}}(u_n \otimes_i \delta_{k/n}^{\otimes i}) \right)\nonumber,
\end{align}
so by Minkowski's and Meyer's inequalities \eqref{Me2},
\begin{align}\label{kbdl}
K_{k,\ell}&\le C\! \sum_{i=0}^\ell  \left\|   \underbrace{D_{k/n}\cdots D_{k/n}}_{\ell-i \textrm{ times}}(u_n \otimes_i \delta_{k/n}^{\otimes i})  \right\|_{q-i,r\beta}.
\end{align}
Let 
\[M_{k,\ell,i}:= \left\|   \underbrace{D_{k/n}\cdots D_{k/n}}_{\ell-i \textrm{ times}}(u_n \otimes_i \delta_{k/n}^{\otimes i})  \right\|_{q-i,r\beta},
\]
 so that \eqref{kbdl} becomes
$K_{k,\ell}\le C\sum_{i=0}^\ell M_{k,\ell,i}$.
We now apply Lemma \ref{a2} with $M=q-i$, $a=\ell-i$, $b=i$, $c=i$, $p=r\beta$ and consider three cases.

\medskip\noindent \emph{Case 1.} Suppose that $0<i\le \ell, H\le 1/2$.
We have
\begin{align}
M_{k,\ell,i}&\le C\|f\|_{q+\ell-2i,r\beta}n^{-1/2-H(2\ell-i)}\le C\|f\|_{q+\ell-2i,r\beta}n^{-1/2-H(2\ell-\ell)}\nonumber\\
&=C\|f\|_{q+\ell-2(\ell-1),r\beta} n^{-1/2-H\ell}\nonumber\\
&\le C\|f\|_{q+\ell-2(\ell-1),r\beta} n^{(-1/2-H\ell)\vee (-\ell(2H\wedge 1))}\label{m1}
\end{align}

\medskip
\noindent \emph{Case 2.} Suppose that $0<i\le \ell, H> 1/2$.
We have
\begin{align*}
M_{k,\ell,i}&\le C\|f\|_{q+\ell-2i,r\beta}n^{-1/2-\ell+i+(-Hi)\vee (q(H-1)+1-i)}\\
&=C\|f\|_{q+\ell-2i,r\beta}n^{(i(1-H)-\ell-1/2)\vee (q(H-1)+1/2-\ell)}.
\end{align*}
Because 
\[ i(1-H) - \ell -1/2 \le \ell(1-H)-\ell-1/2 = -1/2-H\ell
\]
and
\[
q(H-1)+1/2-\ell \le \phi(H)-\ell\le -\ell,
\]
we have, recalling $H>1/2$,
\begin{equation}\label{m2}
M_{k,\ell,i}\le C\|f\|_{q+\ell-2i,r\beta}n^{\left(-1/2-H\ell\right)\vee (-\ell)}= C\|f\|_{q+\ell-2i,r\beta}n^{\left(-1/2-H\ell\right)\vee (-\ell(2H\wedge 1))}.
\end{equation}

\medskip\noindent \emph{Case 3.} Suppose that $i=0$.
Because $i=0$, we have $M=q$, $a=\ell$, $b=0$, and $c=0$. Thus,
\begin{equation}\label{m3}
M_{k,\ell,i}=M_{k,\ell,0}\le C\|f\|_{q+\ell,r\beta}n^{-\ell(2H\wedge 1)}\le C\|f\|_{q+\ell,r\beta}n^{(-1/2-H\ell)\vee(-\ell(2H\wedge 1))}.
\end{equation}

\medskip
Combining \eqref{m1}, \eqref{m2}, and \eqref{m3}, we have
\begin{equation}\label{prodgs}
K_{k,\ell}\le C\|f\|_{q+\ell,r\beta}n^{\left(-1/2-H\ell\right)\vee (-\ell(2H\wedge 1))}
=C\|f\|_{q+\ell,r\beta}n^{-\ell(2H\wedge 1)+\left(-1/2+\ell(H\wedge(1-H))\right)_+}.
\end{equation}
Let $1\le \ell_0(H)\le q-1$ be chosen such that when $\ell\le \ell_0(H)$, 
\[
-1/2+\ell(H\wedge(1-H))\le 0
\]
and when $ \ell > \ell_0(H)$,
\[
-1/2+\ell(H\wedge(1-H)) > 0.
\]
Observe that 
\[
\ell_0(H) = \min\left\{q-1,\left\lfloor{\frac{1}{2(H\wedge(1-H))}}\right\rfloor\right\}.
\]
When $\ell\le \ell_0(H)$,
\[
K_{k,\ell} \le C\|f\|_{q+\ell,r\beta}n^{-\ell(2H\wedge 1)}
\]
and when $\ell> \ell_0(H)$,
\[
K_{k,\ell} \le C\|f\|_{q+\ell,r\beta}n^{-\ell(2H\wedge 1) + \left(-1/2+\ell(H\wedge(1-H))\right)}.
\]
Thus,
\begin{equation}\label{Ks}
\prod_{\ell=1}^{q-1} K_{k,\ell}^{b_\ell}\le C \|f\|_{2q-1,r\beta}^{|b|} n^{\kappa_1(H)},
\end{equation}
where 
\begin{equation}\label{kappa1}
\kappa_1(H):=-(2H\wedge 1) \sum_{\ell=1}^{q-1} \ell b_\ell+ \!\!\sum_{\ell=\ell_0(H)+1}^{q-1} b_\ell\left(-1/2+\ell(H\wedge(1-H))\right).
\end{equation}

Next, we will estimate $L_{k,\ell'}$.
Let $g(x)=f^2(x)$. Then, applying Lemma \ref{lem1}(a), the semi-norm norm \eqref{norm}, and using Minkowski's inequality, we can write
\begin{align*}
\nonumber L_{k,\ell'}
&= \sigma_{H,q}^2 \left\| \int_0^1 g^{(\ell')}(B_s) \alpha_{k,s}^{\ell'}\,ds \right\|_{\beta r} \\
&\le  Cn^{-\ell' (2H\wedge 1)}  \left\| g\right\|_{\ell',\beta r}.
\end{align*}
Noting that for $k\ge 1$, $g^{(k)}(x) = \sum_{z=1}^k C_z f^{(z)}(x) f^{(k-z)}(x)$,  for some combinatoric numbers $C_z$, we have 
\begin{align}\label{prods}
L_{k,\ell'}&\le C n^{-\ell' (2H\wedge 1)} \|f\|_{\ell',2\beta r}^{2}.
\end{align}
Taking the product over $\ell'$ and applying \eqref{prods}, we obtain
\begin{equation}\label{Ls}
\prod_{\ell'=1}^q L_{k,\ell'}^{d_{\ell'} }\le C \|f\|_{q,2\beta r}^{2|d|} n^{\kappa_2(H)},
\end{equation}
where
\begin{equation}\label{kappa2}
\kappa_2(H) :=-(2H\wedge 1)\sum_{\ell'=1}^q \ell' d_{\ell'}.
\end{equation}


Applying \eqref{Ks} and \eqref{Ls}, we have, recalling \eqref{jabdkl} and replacing the summation in $k$ with the factor $n$,
\begin{align}\begin{split}\label{jabd}
J_{a,b,d}&\le Cn^{qH-1/2} \|f\|_{2q,2r\beta}^{1+|b|+2|d| }E[S^{a\alpha}]^{1/\alpha} n\cdot n^{\kappa_1(H)+\kappa_2(H)}\\
&=C\|f\|_{2q,2r\beta}^{1+|b|+2|d| }E[S^{a\alpha}]^{1/\alpha}  n^{qH+\frac12+\kappa_1(H)+\kappa_2(H)}\\
&=C\|f\|_{2q,2r\beta}^{1+|b|+2|d| }E[S^{a\alpha}]^{1/\alpha}  n^{\kappa(H)},
\end{split}\end{align}
where
\[
\kappa(H):=qH+\frac12+\kappa_1(H)+\kappa_2(H).
\]
Recalling \eqref{kappa1}, \eqref{kappa2}, and \eqref{bd}, we have
\begin{align}
\kappa(H)&= qH+\frac12 -(2H\wedge 1)\left(\sum_{\ell=1}^{q-1}\ell b_\ell + \sum_{\ell'=1}^q \ell' d_{\ell'} \right)+ \!\!\sum_{\ell=\ell_0(H)+1}^{q-1} b_\ell\left(-1/2+\ell(H\wedge(1-H))\right)\nonumber\\
&= -q(H\wedge (1-H))+\frac12+ \!\!\sum_{\ell=\ell_0(H)+1}^{q-1} b_\ell\left(-1/2+\ell(H\wedge(1-H))\right).\label{kappa}
\end{align}
We will now show that
\begin{equation}
\kappa(H)\le \phi(H)\label{kp}.
\end{equation}
We consider three cases depending on the value of $\sum_{\ell=\ell_0(H)+1}^{q-1} b_\ell$. 
%
%

\medskip\noindent\emph{Case 1.} Suppose that $\sum_{\ell=\ell_0(H)+1}^{q-1} b_\ell=0$. Then
\[
\kappa(H) =  -(H\wedge (1-H))q+\frac12= \left(-qH+\frac12\right)\vee\left(q(H-1)+\frac12\right)\le \phi(H).
\]

\medskip\noindent\emph{Case 2.} Suppose that $\sum_{\ell=\ell_0(H)+1}^{q-1} b_\ell=1$. We conclude that all of $b_{\ell_0(H)+1},\cdots, b_{q-1}$ are zero except for one, say $b_m=1$, $\ell_0(H)+1\le m\le q-1$. Because $m\le q-1$, 
we have
\begin{align*}
\kappa(H) &=  -(H\wedge (1-H))q+\frac12+\!\!\sum_{\ell=\ell_0(H)+1}^{q-1}b_\ell\left(-1/2+\ell(H\wedge(1-H))\right)\\
&= -q(H\wedge (1-H))+\frac12(1-b_m)+mb_m(H\wedge(1-H)) \\
&=(m-q)(H\wedge (1-H)) \\
&\le-(H\wedge(1-H))=(-H)\vee(H-1)\le \phi(H).
\end{align*}

\medskip\noindent\emph{Case 3.} Suppose that $\sum_{\ell=\ell_0(H)+1}^{q-1} b_\ell\ge 2$. We have
\begin{align*}
\kappa(H) &=  -q(H\wedge (1-H))+\frac12+\!\!\sum_{\ell=\ell_0(H)+1}^{q-1}b_\ell\left(-1/2+\ell(H\wedge(1-H))\right)\\
&= -q(H\wedge (1-H)) +\frac12\left(1-\sum_{\ell=\ell_0(H)+1}^{q-1}b_\ell \right)+(H\wedge (1-H))\sum_{\ell=\ell_0(H)+1}^{q-1} \ell b_\ell\\
&\le -q(H\wedge (1-H))-\frac12+(H\wedge (1-H)) q=-\frac12\le \phi(H),
\end{align*}
completing the proof of \eqref{kp}. 
Thus, we have, recalling \eqref{jabd},
 \begin{align}
J_{a,b,d}&\le C\|f\|_{2q,2r\beta}^{1+|b|+2|d| }E[S^{a\alpha}]^{1/\alpha}  n^{\phi(H)}\nonumber\\
&\le  C\|f\|_{2q,(2q+2)\beta}^{2q+1}E[S^{(2-2q)\alpha}]^{1/\alpha}n^{\phi(H)}\label{jabd2}.
\end{align}
Combining \eqref{fngn}, \eqref{t35}, \eqref{eqcow}, and \eqref{jabd2}, the proof of Theorem \ref{mainresult} is complete.\qed

 { 
 
 \medskip
 \noindent
 {\bf Remark.} 
In order to  understand the phase transition  in the rate of convergence when $H>1/2$, let us discuss a particular example. Suppose that $q=3$ and $f(x)=x$
and $H>1/2$. Then
\[
F_n=   n^{3H-1/2} \sum_{k=0} ^{n-1} B_{k/n}  I_3(\delta_{k/n}^{\otimes 3}).
\]
The random variable $F_n$ can be decomposed as follows: $F_n=G_n + R_n$, where
\[
G_n= n^{3H-1/2} \sum_{k=0} ^{n-1} I_4 \left( \1_{[0,k/n]} \otimes \delta_{k/n}^{\otimes 3} \right)  
\]
and
\[
R_n=3n^{3H-1/2} \sum_{k=0} ^{n-1} 
\langle \1_{[0,k/n]}, \delta_{k/n} \rangle_{\HH}
I_2(\delta_{k/n}^{\otimes 2}).
\]
Let us find out  the rate of convergence in $L^2$ of the residual term $R_n$.
We can write
\[
E[R_n^2] = \frac 94 n^{-2H-1} \sum_{j,k=1} ^{n-1}   ((k+1)^{2H}- k^{2H} -1) ((j+1)^{2H}- j^{2H} -1)  \rho_H^2(j-k).
\]
Then, if $H\ge \frac 34$, the series   $\sum_{h=1} ^\infty \rho_H^2(h)$ is divergent and the expectation $E[R_n^2] $ behaves,   as
$n^{6H-5}$ when $n\rightarrow \infty$. On the other hand, if $H<\frac 34$, the series $\sum_{h=1} ^\infty \rho_H^2(h)$  is convergent and $E[R_n^2] $ behaves, as
$n^{2H-2}$ when $n\rightarrow \infty$.   We see that the phase transition occurs at $H=\frac 34$.
For a general $q\ge 3$,  and assuming $H>1/2$, the phase transition occurs for values of $q$ and $H$  such that the series
$\sum_{h=1} ^\infty  |\rho_H^{q-1}(h)|$ changes its  convergence, that is, when $(2H-2)(q-1) =-1$. 
One can show that the expectation $E[G_n^2]$ converges to a constant, and the rate  of convergence is worse than that of the residual term.
 }

\section{Appendix}
Here we prove a result we need in the proof of Theorem 1.1. Recall the notation $D_{k/n} F = \langle DF, \delta_{k/n}\rangle_{\mathfrak{H}}$ where $\delta_{k/n} = \1_{[k/n,(k+1)/n]}.$

\begin{lemme}\label{a2}
For any integers $\ell, M, a,b,c$ such that $M\ge 0$, $0\le c\le q$, $0\le a\le q-c$, $0\le b\le c$ and any real number $p>1$,there exists a constant $C$ depending on $q$, $M$, $p$, and the Hurst parameter $H$ such that
\begin{align*}
\left\| \underbrace{D_{k/n}\cdots D_{k/n}}_{\text{$a$ times}} \left(  u_n \otimes_{b} \delta_{k/n}^{\otimes c}  \right) \right\|_{M,p}\le C\|f\|_{M+a,p}n^{\kappa(a,b,c,H)},\end{align*}
where
\[\kappa(a,b,c,H):=\begin{cases}{-1/2-H(2a+c)} & b>0, H\le1/2\\
{-1/2-a-H(c-b)+ (-Hb)\vee (q(H-1)+1-b)} & b>0, H>1/2\\
{-a(2H\wedge 1)-Hc} & b=0
\end{cases}.\]

\end{lemme}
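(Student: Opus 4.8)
The plan is to write the $\mathfrak H^{\otimes}$-valued element inside the norm in closed form, reduce its $\mathbb D^{M,p}$-norm to a single weighted double sum over the grid, and then read off the exponent from the covariance estimates of Lemmas \ref{lem1}, \ref{betas} and \ref{threebeta1}. First I would observe that in $u_n\otimes_b\delta_{k/n}^{\otimes c}$ the only randomness is carried by the scalars $f(B_{j/n})$, the tensor legs being deterministic. Since all legs of $\delta_{j/n}^{\otimes q}$ coincide, the contraction collapses with no combinatorial constant, $\delta_{j/n}^{\otimes q}\otimes_b\delta_{k/n}^{\otimes c}=\beta_{j,k}^{b}\,\delta_{j/n}^{\otimes(q-b)}\otimes\delta_{k/n}^{\otimes(c-b)}$, while each directional derivative $D_{k/n}$ differentiates $f(B_{j/n})$ and pairs the emerging $\1_{[0,j/n]}$ with $\delta_{k/n}$, producing one factor $\alpha_{k,j/n}$. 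Hence
\[
\underbrace{D_{k/n}\cdots D_{k/n}}_{a}\bigl(u_n\otimes_b\delta_{k/n}^{\otimes c}\bigr)=n^{qH-\frac12}\sum_{j=0}^{n-1}f^{(a)}(B_{j/n})\,\alpha_{k,j/n}^{a}\,\beta_{j,k}^{b}\,\delta_{j/n}^{\otimes(q-b)}\otimes\delta_{k/n}^{\otimes(c-b)}.
\]

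Next I would bound the Sobolev norm. For $0\le i\le M$, the $i$th Malliavin derivative merely raises $f^{(a)}$ to $f^{(a+i)}$ and appends a deterministic leg $\1_{[0,j/n]}^{\otimes i}$. Expanding the squared $\mathfrak H^{\otimes}$-norm as a double sum over $j,j'$, the deterministic inner products factor as $\langle\1_{[0,j/n]},\1_{[0,j'/n]}\rangle^{i}\in[0,1]$ (a covariance of unit-variance Gaussians), times $\beta_{j,j'}^{q-b}$ and $\beta_{k,k}^{c-b}=n^{-2H(c-b)}$. Taking the $L^{p/2}(\Omega)$-norm, applying Minkowski's inequality termwise, and controlling each product $f^{(a+i)}(B_{j/n})f^{(a+i)}(B_{j'/n})$ by $\|f\|_{a+i,p}^{2}\le\|f\|_{M+a,p}^{2}$ via Cauchy--Schwarz and the seminorm \eqref{norm}, I reduce everything to
\[
\Bigl\|\,\underbrace{D_{k/n}\cdots D_{k/n}}_{a}(u_n\otimes_b\delta_{k/n}^{\otimes c})\,\Bigr\|_{M,p}\le C\,\|f\|_{M+a,p}\,n^{qH-\frac12}\,n^{-H(c-b)}\,\sqrt{T},
\]
where $T:=\sum_{j,j'=0}^{n-1}|\alpha_{k,j/n}|^{a}|\alpha_{k,j'/n}|^{a}|\beta_{j,k}|^{b}|\beta_{j',k}|^{b}|\beta_{j,j'}|^{q-b}$. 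This step is transparent for $p\ge2$, where Minkowski applies directly to the $L^{p/2}$-norm of the squared field; the range $1<p<2$ requires only routine additional care.

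To estimate $T$ I would bound every factor $|\alpha_{k,\cdot}|$ by $n^{-(2H\wedge1)}$ using Lemma \ref{lem1}(a), pulling $n^{-a(2H\wedge1)}$ out of $\sqrt T$ once the $2a$ $\alpha$-factors are removed. When $b=0$ the residual sum is $\sum_{j,j'}|\beta_{j,j'}|^{q}$, handled by Lemma \ref{betas}(b); when $1\le b\le q-1$ the residual sum $\sum_{j,j'}|\beta_{j,k}|^{b}|\beta_{j',k}|^{b}|\beta_{j,j'}|^{q-b}$ is exactly of the form treated in Lemma \ref{threebeta1} (with its parameters equal to $b$ and $q-b$), giving the exponent $(-2H(q+b))\vee(2-2(q+b))$; and the boundary case $b=q$ (which forces $a=0$, $c=q$) is covered by applying Lemma \ref{betas}(a) to $\bigl(\sum_{j}|\beta_{j,k}|^{q}\bigr)^{2}$, producing the same exponent. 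Inserting each bound into the prefactor $n^{qH-\frac12}n^{-H(c-b)}$ and simplifying, the exponent collapses to $-a(2H\wedge1)-Hc$ when $b=0$, to $-\frac12-H(2a+c)$ when $H\le\frac12$ and $b>0$, and, for $H>\frac12$ and $b>0$, the square root of the Brascamp--Lieb $\vee$ combines with the leading $qH$ to yield exactly $-\frac12-a-H(c-b)+\bigl((-Hb)\vee(q(H-1)+1-b)\bigr)$, i.e.\ $\kappa(a,b,c,H)$ in all three regimes.

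The main obstacle is the bookkeeping in the regime $H>\frac12$ with $b>0$: one must verify that bounding the $\alpha$-factors by their uniform bound and feeding the $\beta$-sum into Lemma \ref{threebeta1} reproduces the two competing exponents $-Hb$ and $q(H-1)+1-b$ exactly after the $qH$ from the prefactor is absorbed, and that the degenerate case $q-b=0$ is consistent with the same formula. The remaining care lies in the $L^{p}$ reduction of the second paragraph, which must be routed through the squared norm and an $L^{p/2}$-Minkowski inequality: a direct $L^{p}(\Omega;\mathfrak H)$ triangle inequality would only produce a single sum, which is too lossy to reach the sharp exponent $\kappa(a,b,c,H)$.
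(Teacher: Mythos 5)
Your proposal is correct and follows essentially the same route as the paper's own proof: the same closed-form expansion of the iterated derivative of $u_n\otimes_b\delta_{k/n}^{\otimes c}$ (with $f^{(a+i)}$, $\alpha_{k,j/n}^a$, $\beta_{j,k}^b$ and the appended $\1_{[0,j/n]}^{\otimes i}$ legs), the same reduction of the $\mathbb{D}^{M,p}$-norm to the square root of the double sum $T$, and the same three-case estimation of $T$ ($b=0$ via Lemma \ref{betas}(b), $0<b<q$ via Lemma \ref{threebeta1}, $b=q$ via Lemma \ref{betas}(a)) with identical exponent bookkeeping in the regimes $H\le 1/2$ and $H>1/2$. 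The only difference is cosmetic: you explicitly flag the $1<p<2$ Minkowski subtlety, which the paper passes over in silence.
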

\begin{proof}
For $0\le m\le M$, let
\begin{align*}
A_m&:=E\left[\left\|D^m\left(\underbrace{D_{k/n}\cdots D_{k/n}}_{\text{$a$ times}} \left(  u_n \otimes_{b} \delta_{k/n}^{\otimes c}  \right) \right)\right\|_{\mathfrak{H}^{\otimes (m+q-2b+c)}}^p\right]^{1/p}\\
&=n^{qH-1/2}E\left[\left\|\sum_{j=0}^{n-1} f^{(m+a)}(B_{j/n}) \alpha_{k,j/n}^{a} \beta_{j,k}^b\delta_{k/n}^{\otimes (c-b)} \delta_{j/n}^{\otimes (q-b)} {\otimes} \varepsilon_{j/n}^{\otimes m}\right\|_{\mathfrak{H}^{\otimes (m+q-2b+c)}}^p\right]^{1/p}.
\end{align*}
Taking the norm in $\mathfrak{H}^{m+q-2b+c}$, we have
\begin{align*}
A_m&\le Cn^{qH-1/2} \|f\|_{m+a,p}\!\left\{\sum_{j,j'=0}^{n-1}\!\!|\alpha_{k,j/n}|^a |\alpha_{k,j'/n}|^a |\beta_{j,k}|^b|\beta_{j',k}|^b n^{-2H(c-b)}\! |\beta_{j,j'}|^{q-b}  \right\}^{1/2}.
\end{align*}

We consider three different cases:

\medskip
\noindent \emph{Case 1.} Suppose that $0<b<q$. 
Applying Lemma \ref{threebeta1} to $\sum_{j,j'=0}^{n-1}|\beta_{j,k}|^b |\beta_{j',k}|^b |\beta_{j,j'}|^{q-b}$ and Lemma \ref{lem1}(a) to each of the $\alpha$ terms, we have
\begin{align*}
A_m &\le C n^{qH-1/2} \|f\|_{m+a,p} \left(n^{-2a(2H\wedge 1)}n^{-2H(c-b)} n^{(-2H(q+b))\vee (2-2(q+b))}\right)^{1/2}\\
&=C n^{qH-1/2} \|f\|_{m+a,p} n^{-a(2H\wedge 1)}n^{-H(c-b)} n^{(-H(q+b))\vee (1-(q+b))}.
\end{align*}
When $H\le 1/2$, $q+b\ge 2\ge1/(1-H)$, so $-H(q+b) \ge1-(q+b)$, and we have
\[
A_m \le C n^{qH-1/2} \|f\|_{m+a,p} n^{-H(q+2a+c)}.
\]
When $H>1/2$,
\begin{align*}
A_m &\le Cn^{qH-1/2} \|f\|_{m+a,p}n^{-a} n^{-H(c-b)} n^{(-H(q+b))\vee (1-(q+b))}
\\
&=Cn^{qH-1/2} \|f\|_{m+a,p}n^{-a-H(c-b)}n^{(-H(q+b))\vee (1-(q+b))}\\
&=C \|f\|_{m+a,p}n^{-a-1/2-H(c-b)}n^{(-Hb)\vee (q(H-1)+1-b)}.
\end{align*}

\medskip\noindent \emph{Case 2.} Suppose that $b=q$.
In this case, $c=q$ and $a=0$ and, applying Lemma \ref{betas}(a),
\begin{align*}
A_m&\le C n^{qH-1/2} \|f\|_{m,p} \!\left[\sum_{j,j'=0}^{n-1}|\beta_{j,k}|^q|\beta_{j',k}|^q\right]^{1/2}\\
&= Cn^{qH-1/2} \|f\|_{m,p} \sum_{j=0}^{n-1} |\beta_{j,k}|^q\\
&\le C \|f\|_{m,p}n^{qH-1/2} n^{(-2qH)\vee (1-2q)}.
\end{align*}
Note that $-2qH = (-2qH)\vee (1-2q)$.  Thus, this estimate coincides with the estimate in case 1 when $b=c=q$ and $a=0$.

\medskip\noindent \emph{Case 3.} Suppose that $b=0$.
Applying Lemma \ref{betas}(b) to $\sum_{j,j'=0}^{n-1} |\beta_{j,j'}|^q$ and Lemma \ref{lem1}(a) to each of the $\alpha$ terms,
\begin{align*}
A_m&\le  C n^{qH-1/2} \|f\|_{m+a,p} n^{-a(2H\wedge 1)}n^{-Hc} n^{(1/2-qH)\vee (1-q)}\\
&= C  n^{qH-1/2} \|f\|_{m+a,p}n^{-a(2H\wedge 1)}n^{-Hc} n^{1/2-qH} \\
&\le C \|f\|_{m+a,p}n^{-a(2H\wedge 1)-Hc}.
\end{align*}
This concludes the proof of the lemma.
\end{proof}
 \nocite{*}

{
\medskip
\noindent
{\bf Acknowledgement.} We would like to thank an anonymous  referee for his/her valuable comments.}

\end{document}